\newtheorem{thm}{Theorem}[section]
\newtheorem{cor}[thm]{Corollary}
\newtheorem{lem}[thm]{Lemma}
\newtheorem{prop}[thm]{Proposition}
\theoremstyle{definition}
\newtheorem{Def}[thm]{Definition}
\newtheorem{rem}[thm]{Remark}
\newtheorem*{ack}{Acknowledgement}
\numberwithin{equation}{section}
\numberwithin{figure}{section}
\def\trace{{\text{\rm{trace}}}}
\def\rchi{{\hbox{\raise1.5pt\hbox{$\chi$}}}}
\def\Aut{{\text{\rm{Aut}}}}
\def\isom{\cong}
\def\tensor{\otimes}
\def\lam{\lambda}
\newcommand{\Mbar}{{\overline{\mathcal{M}}}}
\newcommand{\bF}{{\mathbb{F}}}
\newcommand{\bL}{{\mathbb{L}}}
\newcommand{\bR}{{\mathbb{R}}}
\newcommand{\bZ}{{\mathbb{Z}}}
\newcommand{\cM}{{\mathcal{M}}}
\newcommand{\cH}{{\mathcal{H}}}
\newcommand{\la}{{\langle}}
\newcommand{\ra}{{\rangle}}
\newcommand{\half}{{\frac{1}{2}}}
\newcommand{\bp}{{\mathbf{p}}}
\newcommand{\bx}{{\mathbf{x}}}
\begin{document}
\large
\setcounter{section}{0}

\title[Recursion for the
Poincar\'e polynomial of the moduli of curves ]
{
Topological recursion for the
Poincar\'e polynomial of the
combinatorial moduli space of curves
}

\author[M.\ Mulase]{Motohico Mulase}
\address{
Department of Mathematics\\
University of California\\
Davis, CA 95616--8633, U.S.A.}
\email{mulase@math.ucdavis.edu}

\author[M.~Penkava]{Michael Penkava}
\address{
Department of Mathematics\\
University of Wisconsin\\
Eau Claire, WI 54702--4004}
\email{penkavmr@uwec.edu}

\begin{abstract}
We show that the  Poincar\'e polynomial associated with
the orbifold cell decomposition of
the moduli space of smooth algebraic curves with
distinct marked points satisfies a topological recursion
formula of the Eynard-Orantin type.
The recursion
uniquely  determines the Poincar\'e polynomials
from the initial data.
Our key discovery is that
the Poincar\'e polynomial is the Laplace transform of the
number of Grothendieck's dessins d'enfants.
\end{abstract}

\subjclass[2000]{Primary: 14H15, 14N35, 05C30, 11P21;
Secondary: 81T30}

\maketitle

\allowdisplaybreaks

\tableofcontents

\section{Introduction}
\label{sect:intro}

The Euler characteristic of the moduli space
$\cM_{g,n}$ of smooth algebraic curves
of genus $g$ and $n$ distinct marked points has a closed
formula
\begin{equation}
\label{eq:HZ}
\begin{aligned}
\chi\left(\cM_{g,n}\right)
&=
(-1)^{n-1}\frac{(2g-3+n)!}{(2g-2)!}\cdot \zeta(1-2g)
\\
&=
(-1)^n \frac{(2g-3+n)!}{(2g)!}(2g-1)b_{2g}
\end{aligned}
\end{equation}
due to Harer and Zagier \cite{HZ}, where $\zeta(s)$ is the
Riemann zeta function and $b_r$  the Bernoulli number
defined by
$$
\frac{x}{e^x-1} = \sum_{r=0} \frac{b_r}{r!}\;x^r.
$$
A relation of this formula to quantum field theory,
in particular  matrix models, was discovered
by Penner \cite{P}, and a proof of (\ref{eq:HZ})
in terms of an asymptotic analysis of
the Feynman diagram expansion of the
Penner matrix model was established in
\cite{M1995}.

A Feynman diagram for the Penner model is a
double-edge graph of 't Hooft \cite{tH}, which we call
a \emph{ribbon graph} following Kontsevich \cite{K1992}.
The reason that ribbon graphs appear in the calculation of the
Euler characteristic of the moduli space lies in the
isomorphism of topological orbifolds
\begin{equation}
\label{eq:M=RG}
{\cM}_{g,n}\times \bR_+ ^n \isom RG_{g,n}
\end{equation}
due to Harer \cite{Harer}, Mumford \cite{Mumford}, and Strebel
\cite{Strebel}. Here
\begin{equation}
\label{eq:RG}
RG_{g,n} = \coprod_{\substack{\Gamma {\text{ ribbon graph}}\\
{\text{of type }} (g,n)}}
\frac{\bR_+ ^{e(\Gamma)}}{\Aut (\Gamma)}
\end{equation}
is the smooth orbifold
\cite{STT}
consisting of metric ribbon graphs of a given
topological type $(g,n)$ with valence $3$ or more,
$e(\Gamma)$ is the number of edges of the ribbon graph $\Gamma$,
and $\Aut(\Gamma)$ is the group of ribbon graph automorphisms
of $\Gamma$ that fix every face.
The Penner model is the generating function of the
Euler characteristic of $RG_{g,n}$. As an element
 of the formal power series in two variables
$z$ and $M$, we have the equality
\begin{multline}
\label{eq:matrix}
\log \int_{\cH_M}\exp
\left(
-\sum_{j=2} ^\infty \frac{(\sqrt{z})^{j-2}}{j}\;
\trace (X^j)
\right)
dX
\\
=
\sum_{\substack{g\ge 0, \;n>0\\2g-2+n>0}}
(-1)^n\;
\chi\left(RG_{g,n}\right) \frac{M^n}{n!}\; z^{2g-2+n},
\end{multline}
where the parameter $M$ appears as the size of
the Hermitian matrix $X$ in the left-hand side,
$\cH_M$ is the linear
space of $M\times M$ Hermitian matrices, and $dX$
is a suitably normalized Lebesgue measure on
$\cH_M$. We refer to \cite{M1995} for the precise
meaning of the equality.

Although the matrix integral (\ref{eq:matrix}) gives an effective
tool to calculate the Euler characteristic
$$
\chi\left(RG_{g,n}\right)  =
\sum_{\substack{\Gamma {\text{ ribbon graph}}\\
{\text{of type }} (g,n)}}
\frac{(-1) ^{e(\Gamma)}}{|\Aut (\Gamma)|},
$$
it does not tell us anything about more refined information of
the orbifold cell structure of $RG_{g,n}$.
One can ask:
\emph{Isn't there any effective tool to find more
numerical information about
the orbifold} $RG_{g,n}$?

The purpose of this paper is to answer this question.
Our answer is again based on an idea from physics,
this time utilizing
the Eynard-Orantin \emph{topological recursion theory}
\cite{EO1}.

For a fixed $(g,n)$ in the stable range, {\it i.e.}, $2g-2+n>0$, we
choose $n$ variables $t_1,t_2,\dots,t_n$, and
define the function
$$
z(t_i,t_j) = \frac{(t_i+1)(t_j+1)}{2(t_i+t_j)}.
$$
An edge $\eta$ of a ribbon graph $\Gamma$ bounds two
faces, say $i_\eta$ and $j_\eta$. These two faces may be
actually the same. Now we define the
\emph{Poincar\'e polynomial} of $RG_{g,n}$ in the $z$-variables
by
\begin{equation}
\label{eq:Poincare}
F_{g,n}(t_1,\dots,t_n) =
\sum_{\substack{\Gamma {\text{ ribbon graph}}\\
{\text{of type }} (g,n)}}
\frac{(-1)^{e(\Gamma)}}{|\Aut(\Gamma)|}
\prod_{\substack{\eta \text{ edge}\\
\text{of }\Gamma}}
z\!\left(t_{i_\eta},t_{j_\eta}
\right),
\end{equation}
which is a polynomial in $z(t_i,t_j)$ but actually a
symmetric rational function in $t_1,\dots,t_n$.
Our main theorem of this paper is a
topological recursion formula that uniquely determines
the Poincar\'e polynomials. To state the formula
in a compact fashion, we use the following notation.
Let $N=\{1,2,\dots,n\}$ be the index set labeling the
marked points of a smooth algebraic curve. The faces
of a ribbon graph of type $(g,n)$
are also labeled by the same set. For every
subset $I\subset N$, we denote
$$
t_I = \left(t_i\right)_{i\in I}.
$$

\begin{thm}
\label{thm:main}
The Poincar\'e polynomial $F_{g,n}(t_N)$
with $(g,n)$ in the stable range
$$
2g-2+n>0
$$
is uniquely determined by the following
topological recursion formula from the
initial values $F_{0,3}(t_1,t_2,t_3)$ and
$F_{1,1}(t_1)$.
\begin{multline}
\label{eq:main}
F_{g,n}(t_N)
\\
=
-\frac{1}{16}
\int_{-1} ^{t_1}
\Biggr[
\sum_{j=2} ^n
\frac{t_j}{t^2-t_j^2}
\Bigg(
\frac{(t^2-1)^3}{t^2}\frac{\partial}{\partial t}
F_{g,n-1}(t,t_{N\setminus\{1,j\}})
-
\frac{(t_j^2-1)^3}{t_j^2}\frac{\partial}{\partial t_j}
F_{g,n-1}(t_{N\setminus\{1\}})
\Bigg)
\\
+
\sum_{j=2} ^n
\frac{(t^2-1)^2}{t^2}\frac{\partial}{\partial t}
F_{g,n-1}(t,t_{N\setminus\{1,j\}})
\\
+
\frac{1}{2}\;\frac{(t^2-1)^3}{t^2}
\frac{\partial^2}{\partial u_1\partial u_2}
\Bigg(
F_{g-1,n+1}(u_1,u_2,t_{N\setminus\{1\}})
\\
+
\sum_{\substack{g_1+g_2=g\\I\sqcup J=N\setminus\{1\}}}
^{\rm{stable}}
F_{g_1,|I|+1}(u_1,t_I)F_{g_2,|J|+1}(u_2,t_J)
\Bigg)
\Bigg|_{u_1=u_2=t}\;
\Biggr]\;dt.
\end{multline}
Here the last sum is taken over all partitions $g_1+g_2 = g$ and
set partitions $I\sqcup J=N\setminus\{1\}$ subject to
the stability conditions $2g_1-1+|I|>0$ and $2g_2-1+|J|>0$.
\end{thm}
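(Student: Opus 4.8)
The plan is to prove the recursion not directly for the rational functions $F_{g,n}$, but at the level of the integral combinatorics underlying them, and then to transport the identity through the Laplace transform alluded to in the abstract. Concretely, for positive integers $p_1,\dots,p_n$ let $N_{g,n}(p_1,\dots,p_n)$ denote the weighted number of ribbon graphs of type $(g,n)$ whose edges carry positive integer lengths and whose $i$-th face has perimeter $p_i$, each graph counted with weight $1/|\Aut(\Gamma)|$; these are exactly Grothendieck's dessins d'enfants of the prescribed boundary type. I would first record that $N_{g,n}$ is a quasi-polynomial in the $p_i$ and that the Poincar\'e polynomial (\ref{eq:Poincare}) is its Laplace transform: summing the edge contribution $z(t_i,t_j)$ over the integer length $\ell$ of each edge bounding faces $i$ and $j$ recovers $F_{g,n}(t_N)$ from $N_{g,n}(\mathbf p)$ under the substitution sending the Laplace dual of $p_i$ to $t_i$. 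The sign $(-1)^{e(\Gamma)}$ is precisely the orbifold Euler-characteristic weight, so this transform is the natural refinement of (\ref{eq:matrix}).

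The combinatorial heart of the argument is a difference equation for $N_{g,n}$ obtained by analyzing the boundary of the first face. Distinguishing one edge on the boundary of face $1$ and examining how it sits in $\Gamma$ yields the familiar trichotomy of the Eynard--Orantin/Tutte recursion for maps: the edge may separate face $1$ from a distinct face $j\in\{2,\dots,n\}$, in which case removing it merges the two faces and produces a ribbon graph counted by $N_{g,n-1}$; or the edge may bound face $1$ on both sides, in which case cutting it splits face $1$ into two new boundaries, either lowering the genus to produce $N_{g-1,n+1}$ or disconnecting the surface to produce the sum $\sum N_{g_1,\ast}N_{g_2,\ast}$ over stable splittings $g_1+g_2=g$, $I\sqcup J=N\setminus\{1\}$. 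Keeping careful track of the perimeter bookkeeping and of the automorphism weights, these three cases assemble into a single recursion expressing $N_{g,n}(\mathbf p)$ as a sum, over the possible length of the distinguished edge, of the three types of counts at smaller complexity.

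I would then apply the Laplace transform term by term. Under the change of variable $p_i\mapsto t_i$ the elementary summations $\sum_{\ell}$ of the combinatorial recursion become rational kernels: the distinct-faces case contributes the factor $t_j/(t^2-t_j^2)$ together with the discrete difference in the $p_1,p_j$ variables, which transforms into the prefactors $(t^2-1)^3/t^2$ and $(t_j^2-1)^3/t_j^2$ acting by $\partial/\partial t$ and $\partial/\partial t_j$ respectively, while the subleading difference yields the middle term with prefactor $(t^2-1)^2/t^2$; the genus-dropping and disconnecting cases both carry the single kernel $(t^2-1)^3/t^2$ and, after transforming the two new boundary variables to $u_1,u_2$ and restoring the diagonal $u_1=u_2=t$, produce the $\partial^2/\partial u_1\partial u_2$ expression. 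The outer integral $\int_{-1}^{t_1}\,dt$ and the constant $-\tfrac{1}{16}$ arise as the inverse of the $\partial/\partial t_1$ that the transform builds in, the base point $t_1=-1$ being fixed by the requirement that $F_{g,n}$ be a genuine polynomial in the $z$-variables (equivalently, regularity of the transform at the relevant branch point). Matching all constants and kernels against (\ref{eq:main}) completes the derivation, and uniqueness is then immediate: every $F_{g',n'}$ on the right has $2g'-2+n'$ strictly smaller than $2g-2+n$ by the stability conditions, so the recursion determines all $F_{g,n}$ in the stable range from the two initial values $F_{0,3}$ and $F_{1,1}$.

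The main obstacle I anticipate is the precise verification in the last step rather than any conceptual difficulty: establishing that $N_{g,n}$ is a quasi-polynomial, so that its Laplace transform is the rational function (\ref{eq:Poincare}), and, above all, checking that the discrete differences of the combinatorial recursion Laplace-transform into exactly the differential operators with the stated prefactors $(t^2-1)^3/t^2$, $(t^2-1)^2/t^2$ and the kernel $t_j/(t^2-t_j^2)$, with the correct overall constant $-\tfrac{1}{16}$. This is where the idiosyncratic change of variable hidden in $z(t_i,t_j)$ must be pinned down exactly, and where an error of a factor of $2$ or a sign in the edge-summation or the automorphism weighting would propagate into the final formula.
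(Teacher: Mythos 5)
Your proposal follows essentially the same route as the paper: identify $F_{g,n}$ as the Laplace transform of the lattice-point (dessin) count $N_{g,n}(\mathbf{p})$ under the substitution $e^{-w_j}=\frac{t_j+1}{t_j-1}$, transform the combinatorial recursion for $N_{g,n}$ term by term into a differential equation for $\frac{\partial}{\partial t_1}F_{g,n}$, and integrate from $-1$ to $t_1$, with uniqueness by induction on $2g-2+n$. Two remarks on scope: the paper does not reprove the combinatorial recursion --- it quotes Theorem~\ref{thm:integralrecursion} from \cite{CMS} --- whereas you propose to rederive it by the edge/ciliation trichotomy; that is legitimate but is where most of the hidden bookkeeping lives (the kernels $q(p_1\pm p_j-q)$, the Heaviside cutoffs, the automorphism weights, and the parity constraint $p_1+\cdots+p_n\equiv 0 \bmod 2$ that the paper uses to set $p_1+p_j-q=2\ell$ when evaluating the transform). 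Also, the quasi-polynomiality of $N_{g,n}$ that you list as an obstacle is not needed at all: the transform is computed graph by graph as a product over edges of geometric series, and this computation is itself what produces the factor $-z(t_{i_\eta},t_{j_\eta})$ per edge, hence the sign $(-1)^{e(\Gamma)}$.

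The one genuine gap is your justification of the base point $t_1=-1$. You say it is ``fixed by the requirement that $F_{g,n}$ be a genuine polynomial in the $z$-variables (equivalently, regularity of the transform at the relevant branch point).'' Neither criterion pins down the integration constant: an antiderivative of the right-hand side of the differential recursion is ambiguous up to an arbitrary function $c(t_{N\setminus\{1\}})$, and adding, say, $z(t_2,t_3)$ to $F_{g,n}$ preserves both polynomiality in the $z$-variables and regularity at $t_1=-1$. What actually fixes the constant --- and what the paper proves as (\ref{eq:Fj}) in Corollary~\ref{cor:F1} --- is the vanishing $F_{g,n}\big|_{t_1=-1}=0$: face $1$ of every ribbon graph has at least one boundary edge, so each monomial of (\ref{eq:Poincare}) carries a factor $(t_1+1)$; equivalently, $t_1=-1$ corresponds to $w_1=+\infty$, where the Laplace transform manifestly vanishes. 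Once this vanishing statement is inserted, your integration step and the inductive uniqueness argument go through exactly as in the paper's Theorem~\ref{thm:unique}.
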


\begin{rem}
\begin{enumerate}
\item
It was shown in \cite{CMS} that
the symmetric differential
$d_1\tensor \cdots\tensor d_n F_{g,n}(t_N)$
satisfies an Eynard-Orantin type topological recursion.
The relation between the Euler characteristic
of $\cM_{g,n}$ and the Eynard-Orantin theory was
first pointed out in \cite{N2}.

\item
The word \emph{topological} recursion refers to the
inductive structure on the quantity $2g-2+n$, which is
the absolute value of the Euler characteristic of an
oriented
$n$-punctured surface of genus $g$. Reduction of the quantity
$2g-2+n$ by one has appeared in many recent works on
moduli theory of curves,
Gromov-Witten theory and related topics. This includes
the operation of
 cutting off a pair of
pants from a bordered surface
as in \cite{Mir1, Mir2}, the Hurwitz move or the cut-and-join
equation of Hurwitz numbers \cite{GJ, H, V}, the edge
removal operation on $RG_{g,n}$ of \cite{CMS, N1},
and many generalizations including
\cite{BKMP, BM, EMS, LX, M, MS, MZ,
Z1, Z2}.
\end{enumerate}
\end{rem}

By the definition of $F_{g,n}(t_N)$ and the fact that
$z(1,1)=1$, the Poincar\'e polynomial recovers the
Euler characteristic of the moduli space
$\cM_{g,n}$ as the special value
$$
F_{g,n}(1,1,\dots,1) = \chi\left(RG_{g,n}\right)
=(-1)^n\chi\left(\cM_{g,n}\right).
$$
The Poincar\'e polynomial
 becomes particularly simple when $n=1$. We have
\begin{equation}
\label{eq:Fg1intro}
F_{g,1}(t) =
\sum_{\substack{\Gamma {\text{ ribbon graph}}\\
{\text{of type }} (g,1)}}
\frac{(-1)^{e(\Gamma)}}{|\Aut(\Gamma)|}\;
z^{e(\Gamma)},
\end{equation}
where
\begin{equation}
\label{eq:ztt}
z=z(t,t) = \frac{(t+1)^2}{4t}.
\end{equation}
An immediate generalization of the above formula is 
the diagonal value
\begin{equation}
\label{eq:tj=t}
F_{g,n}(t, t, \dots,t) =
\sum_{\substack{\Gamma {\text{ ribbon graph}}\\
{\text{of type }} (g,n)}}
\frac{(-1)^{e(\Gamma)}}{|\Aut(\Gamma)|}\;
z^{e(\Gamma)}.
\end{equation}
Because of this formula our terminology of 
calling $F_{g,n}(t_N)$ the
``Poincar\'e polynomial'' is justified.

Although it is not obvious from the definition or even
from Theorem~\ref{thm:main}, the symmetric rational function
$F_{g,n}(t_1,\dots,t_n)$ is actually a \emph{Laurent polynomial}.
Therefore, it makes sense to extract the highest degree terms.
If we naively extract the top degree term from $z(t_i,t_j)$, then 
we obtain
$$
z^{\rm{top}}(t_i,t_j)=\frac{t_it_j}{2(t_i+t_j)}.
$$
Since the number of edges of a ribbon graph is maximum 
for a trivalent graph, we obtain the following.

\begin{thm}
\label{thm:appl}
The Poincar\'e polynomial $F_{g,n}(t_N)$ is a
Laurent polynomial in $t_1,t_2,\dots,t_n$
of degree $3(2g-2+n)$
such that every monomial term contains
only an odd  power of each $t_j$. The leading homogeneous
polynomial
$F_{g,n}^{\rm{top}}(t_N)$ of $F_{g,n}(t_N)$
is given by
\begin{equation}
\label{eq:Ftop}
\begin{aligned}
F_{g,n}^{\rm{top}}(t_N)
&=
\sum_{\substack{\Gamma {\text{ trivalent ribbon}}\\
{\text{graph of type }} (g,n)}}
\frac{(-1)^{e(\Gamma)}}{|\Aut(\Gamma)|}
\prod_{\substack{\eta \text{ edge}\\
\text{of }\Gamma}}
\frac{t_{i_\eta} t_{j_\eta}}{2\left(
t_{i_\eta}+ t_{j_\eta}
\right)}
\\
&=
\frac{(-1)^n}{2^{5g-5+2n}}
\sum_{\substack{d_1+\cdots+d_n\\=3g-3+n}}
\la \tau_{d_1}\cdots\tau_{d_n}\ra_{g,n}
\prod_{j=1}^n \frac{(2d_j)!}{d_j!} \left(
\frac{t_j}{2}
\right)^{2d_j+1},
\end{aligned}
\end{equation}
where
$$
\la \tau_{d_1}\cdots\tau_{d_n}\ra_{g,n}
=
\int_{\Mbar_{g,n}}c_1(\bL_1)^{d_1}\cdots c_1(\bL_n)^{d_n}
$$
are the $\psi$-class intersection numbers of the tautological
cotangent line bundles $\bL_1,\dots,\bL_n$ on $\Mbar_{g,n}$.
The above formula is identical to the boxed formula
of Kontsevich \cite[page 10]{K1992}.
The topological recursion
{\rm{(\ref{eq:main})}} restricts to the leading
 terms $F_{g,n}^{\rm{top}}(t_N)$ and recovers
 the Virasoro constraint condition, or the DVV-formula,
of the $\psi$-class intersection numbers due to
Dijkgraaf-Verlinde-Verlinde \cite{DVV} and Witten
\cite{W1991}.
\end{thm}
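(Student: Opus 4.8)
The plan is to deduce all four assertions from the generating function for Grothendieck's dessins d'enfants. Let $N_{g,n}(p_1,\dots,p_n)$ denote the weighted number of ribbon graphs of type $(g,n)$ carrying positive integer edge lengths whose $i$-th face has total boundary length $p_i$. The first step is to establish the \emph{Laplace transform identity}
\[
F_{g,n}(t_N) = \sum_{p_1,\dots,p_n\ge 1} N_{g,n}(p_1,\dots,p_n)\prod_{i=1}^n\left(\frac{t_i+1}{t_i-1}\right)^{p_i}.
\]
This is proved edge-by-edge: under the substitution $x_i=(t_i+1)/(t_i-1)$ an edge between faces $i$ and $j$ of integer length $\ell$ contributes $(x_ix_j)^{\ell}$, and $\sum_{\ell\ge 1}(x_ix_j)^{\ell}=x_ix_j/(1-x_ix_j)=-z(t_i,t_j)$. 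Grouping integer metrics by their topological type and collecting the $e(\Gamma)$ factors $-z$ reproduces the sign $(-1)^{e(\Gamma)}$ in \eqref{eq:Poincare}.

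The second step feeds the quasi-polynomiality of $N_{g,n}$ into this identity. By Norbury's lattice-point count for the cells of $RG_{g,n}$, the function $N_{g,n}$ is on each parity class a polynomial, even in each $p_i$, of total degree $2(3g-3+n)$, whose top homogeneous part is the Kontsevich symplectic volume of $\cM_{g,n}$. The elementary fact that, at $x=(t+1)/(t-1)$, the sum $\sum_{p\ge 1}p^{d}x^{p}$ is a polynomial in $t$ of degree $d+1$ with leading coefficient $d!/(-2)^{d+1}$ then shows that the transform sends a monomial $\prod_i p_i^{d_i}$ of $N_{g,n}$ to a Laurent polynomial of degree $\sum_i(d_i+1)$ in $t_N$. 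Summing over the monomials proves that $F_{g,n}$ is a Laurent polynomial of degree $2(3g-3+n)+n=3(2g-2+n)$, the maximal degree coming exactly from the top part of $N_{g,n}$. For the parity I would use that $N_{g,n}$ is even in each $p_i$, so that only even exponents $d_i$ occur, together with the fact that $\sum_{p\ge 1}p^{d}x^{p}$ is an \emph{odd} function of $t$ when $d$ is even; granting the correct handling of the $d=0$ mode (via the symmetrized transform and the reciprocity $N_{g,n}(\dots,-p_i,\dots)=N_{g,n}(\dots,p_i,\dots)$), this gives the stated parity.

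For the leading term I would transform the Kontsevich volume term by term. Writing its coefficient of $\prod_i p_i^{2d_i}$ as $\la\tau_{d_1}\cdots\tau_{d_n}\ra_{g,n}$ divided by $\prod_i d_i!$ and a normalizing power of two, the leading coefficient $(2d_i)!/(-2)^{2d_i+1}$ of the transform of $p_i^{2d_i}$ combines with the $1/d_i!$ to produce $\frac{(2d_i)!}{d_i!}(t_i/2)^{2d_i+1}$, while the sign $(-1)^{2d_i+1}=-1$ in each variable assembles into the global $(-1)^n$; matching the powers of two then yields the boxed formula \eqref{eq:Ftop}. To recover the DVV/Virasoro relation I would insert the homogeneous ansatz \eqref{eq:Ftop} into \eqref{eq:main} and equate top-degree parts: the kernel factors $(t^2-1)^3/t^2$, $(t^2-1)^2/t^2$ and $t_j/(t^2-t_j^2)$ collapse to their highest monomials, the antiderivative $\int_{-1}^{t_1}$ raises degree by one, and the coefficients $(2d_j+1)$, the splitting $\sum_{a+b=d_1-2}$, and the factor $\tfrac12$ of the DVV recursion for $\la\tau_{d_1}\cdots\tau_{d_n}\ra_{g,n}$ emerge from the transformed monomials.

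The main obstacle is concentrated in the last step and in two of its inputs: pinning down the exact top part of the quasi-polynomial $N_{g,n}$ as the Kontsevich volume with the correct normalization, and the degree bookkeeping needed to make the restricted kernel of \eqref{eq:main} reproduce the DVV coefficients verbatim. The parity statement is the other sensitive point: as the low-order modes already show, it genuinely requires the symmetrized form of the Laplace transform rather than the naive one-sided sum.
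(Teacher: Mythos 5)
Your step 1 is the paper's own Theorem~\ref{thm:Finz} in Section~\ref{sect:LT}, proved by the same edge-by-edge geometric series; after that your route genuinely diverges from the paper's. The paper establishes the Laurent property, the degree, and the odd-parity statement by induction on $2g-2+n$ through the differential recursion (\ref{eq:LTofN}) (Theorem~\ref{thm:Laurent}): the recursion kernel produces Laurent polynomials because $\bigl(h(x)-h(y)\bigr)/(x^2-y^2)$ is Laurent for $h$ even, and the integration $\int_{-1}^{t_1}$, forced by the vanishing (\ref{eq:Fj}), restores odd powers. The first line of (\ref{eq:Ftop}) then follows because trivalent graphs maximize $e(\Gamma)$, and the intersection-number identification and the DVV restriction are quoted from \cite{K1992, BCSW, CMS}. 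You instead propose to push Norbury's quasi-polynomiality of $N_{g,n}$ \cite{N1} through the Laplace transform monomial by monomial. In principle this is a viable alternative, and it would exhibit more directly why intersection numbers appear; but as written it has a gap that is not mere bookkeeping.

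The gap: your elementary transform $\sum_{p\ge1}p^d x^p$, $x=(t+1)/(t-1)$, is an honest \emph{polynomial} in $t$ of degree $d+1$, so a literally monomial-by-monomial application of it would make $F_{g,n}$ a polynomial --- false already for $F_{0,3}$, which contains $1/(t_1t_2t_3)$ by (\ref{eq:F03}). The reason is that $N_{g,n}(\bp)$ vanishes unless $p_1+\cdots+p_n$ is even and is a different polynomial on each parity class, so the transform must be assembled from parity-restricted sums. These behave differently: they carry negative powers of $t$ (e.g.\ $\sum_{p\ \mathrm{even}}x^p=-(t+1)^2/(4t)$), their leading coefficients are $(-1)^{d+1}d!/2^{d+2}$, half your value, and the summation over the $2^{n-1}$ admissible parity classes feeds directly into the constant $2^{5g-5+2n}$ of (\ref{eq:Ftop}); so ``matching the powers of two'' is the computation itself, not a postscript. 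Worse for the parity statement: the even-class $d=0$ sum equals $-t/4-1/2-1/(4t)$, which has mixed parity, so oddness of every monomial of $F_{g,n}$ cannot be seen termwise; it requires relations among the subleading coefficients of the quasi-polynomial across parity classes --- precisely the data controlled by Norbury's alternative assignment of values at $p_i=0$ \cite{N1, N2}, which you do not pin down, and your appeal to a symmetrized transform is circular, since its agreement with the one-sided sum is equivalent to the cancellation being claimed. The paper's inductive proof needs none of this input beyond its own recursion and initial values. Finally, your plan for the DVV part (restricting (\ref{eq:main}) to top-degree terms) is sound and is essentially what \cite{CMS, BCSW} carry out, but in your proposal it remains an outline.
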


It requires the deep theory of Mirzakhani \cite{Mir1, Mir2}
to relate the leading terms $F_{g,n}^{\rm{top}}(t_N)$ and the
intersection numbers
because of the difference between $\cM_{g,n}$ and
$\Mbar_{g,n}$.
The contribution of
Theorem~\ref{thm:appl} is to identify the origin of the
Virasoro constraint condition as the edge-removal
operation of ribbon graphs of
 \cite{CMS, N1}, and to  clarify the relation between the
 combinatorics of counting problems 
 and the geometry of intersection numbers.
 For the moduli space of vector bundles on curves,
 Harder and Narasimhan used Deligne's solution of the
 Weil conjecture to obtain the Poincar\'e polynomial. 
 Although what we are dealing with in this article is
 much simpler than the situation
 of \cite{HN}, we  find that again a counting problem
 plays a key role in calculating the Poincar\'e polynomial. 
 Here the critical differences are that we use lattice point counting 
 rather than moduli theory over the finite field
 $\bF_q$, and that through (\ref{eq:Ftop}) 
 the counting problem also leads to the intersection numbers
 of the \emph{compactified} moduli space $\Mbar_{g,n}$.

 We note that the polynomial situation 
 of Theorem~\ref{thm:appl} is
similar to the case of simple Hurwitz numbers
studied in \cite{EMS, MZ}.
Indeed, the result of \cite{MZ} is that the
\emph{Laplace
transform} of simple Hurwitz numbers as a function of
a partition is a \emph{polynomial} that satisfies a
topological recursion. This recursion proves the DVV formula
of \cite{DVV, K1992, W1991} when restricted to the
leading terms, and also proves the $\lambda_g$-conjecture
(the theorem of \cite{FP1, FP2})
when restricted to the lowest degree homogeneous terms.
In a surprising similarity, we show that the Laurent polynomial
$F_{g,n}(t_1,\dots,t_n)$ is the Laplace transform of
the number of Grothendieck's \emph{dessins d'enfants}
\cite{Belyi, MP1998, SL}.

One can ask: \emph{Why does the Laplace transform appear
in this context?} A short answer is that the Laplace transform
here is in fact the \emph{mirror map} that transforms the
A-model side of topological string theory to the B-model side.
We do not investigate this idea any further in this paper, and
refer to the introduction of \cite{CMS, EMS, MZ}
for more discussion.

This paper is organized as follows. We review the necessary
information on the ribbon graph complex in
Section~\ref{sect:combinatorial}. In Section~\ref{sect:lattice},
we recall the topological recursion for the number of lattice
points of $RG_{g,n}$ that was established in \cite{CMS}.
We then show in
Section~\ref{sect:LT} that the Laplace transform of this number
 is
exactly the Poincar\'e polynomial
of (\ref{eq:Poincare}). A differential equation
for the Poincar\'e polynomials is derived
in Section~\ref{sect:recursion}. The initial values of the
recursion formula are calculated in
Section~\ref{sect:initial}. In the final section we prove
Theorem~\ref{thm:main} and
Theorem~\ref{thm:appl}.

\section{The combinatorial model of the moduli space}
\label{sect:combinatorial}

We begin by listing basic facts about ribbon graphs
and the combinatorial model for the moduli space
$\cM_{g,n}$ due to Harer
\cite{Harer},
Mumford \cite{Mumford} and Strebel
\cite{Strebel},  following  \cite{MP1998}.
Ribbon graphs are often referred to as
 Grothendieck's
\emph{dessins d'enphants}. The standard literature on this
subject
is \cite{SL}, which contains Grothendieck's \emph{esquisse}.
We do not consider any number theoretic aspects
of the dessins in this paper.

A \emph{ribbon graph} of topological type $(g,n)$
is the $1$-skeleton of a cell-decomposition of a closed
oriented topological surface $\Sigma$ of genus $g$
that decomposes the surface into a disjoint union of
$v$ $0$-cells, $e$ $1$-cells, and $n$ $2$-cells. The Euler
characteristic of the surface is given by $2-2g = v-e+n$.
The $1$-skeleton of a cell-decomposition is a graph
$\Gamma$ drawn on $\Sigma$, which consists
of $v$ vertices and $e$ edges.
An edge can form a loop. We denote by
$\Sigma_\Gamma$ the cell-decomposed surface with $\Gamma$
its $1$-skeleton.
Alternatively, a ribbon graph can be
defined as a graph with a cyclic order
given to the incident half-edges at each vertex. By abuse of
terminology, we call the boundary of a $2$-cell of
$\Sigma_\Gamma$ a
\emph{boundary}  of $\Gamma$, and the $2$-cell itself as
a \emph{face} of $\Gamma$.

A \emph{metric} ribbon graph is a ribbon graph with a
positive real number (the length) assigned to each edge.
For a given ribbon graph $\Gamma$ with $e=e(\Gamma)$
edges, the space of metric
ribbon graphs is $\bR_+ ^{e(\Gamma)}/\Aut (\Gamma)$,
where the
automorphism group acts by permutations of edges
(see \cite[Section~1]{MP1998}).
We restrict ourselves to the case that
$\Aut (\Gamma)$ fixes each $2$-cell of the cell-decomposition.
We also require that every vertex of a ribbon graph has degree
(i.e., valence) $3$ or more.
Using the canonical holomorphic coordinate systems on
a topological surface of \cite[Section~4]{MP1998}
 and the Strebel differentials \cite{Strebel}, we have
an isomorphism of topological orbifolds \cite{Harer,Mumford}
\begin{equation}
\label{eq:M=RG}
{\cM}_{g,n}\times \bR_+ ^n \isom RG_{g,n}.
\end{equation}
Here
$$
RG_{g,n} = \coprod_{\substack{\Gamma {\text{ ribbon graph}}\\
{\text{of type }} (g,n)}}
\frac{\bR_+ ^{e(\Gamma)}}{\Aut (\Gamma)}
$$
is the orbifold consisting of metric ribbon graphs of a given
topological type $(g,n)$.
The gluing of orbi-cells
 is done by making
the length of a non-loop edge tend to $0$. The space
 $RG_{g,n}$ is a smooth orbifold
(see \cite[Section~3]{MP1998} and \cite{STT}).
We denote by $\pi:RG_{g,n}\longrightarrow
\bR_+ ^n$ the natural projection via (\ref{eq:M=RG}), which
is the assignment of the collection of perimeter length
of each boundary to a given metric ribbon graph.

Take a ribbon graph $\Gamma$. Since $\Aut(\Gamma)$
fixes every boundary component of $\Gamma$, they are labeled
by $N=\{1,2\dots,n\}$. For the moment let us give a label to
each edge of $\Gamma$ by an index set $E = \{1,2,\dots,e\}$.
The edge-face incidence matrix  is defined by
\begin{equation}
\label{eq:incidence}
\begin{aligned}
A_\Gamma &= \big[
a_{i\eta}\big]_{i\in N,\;\eta\in E};\\
a_{i\eta} &= \text{ the number of times edge $\eta$ appears in
face $i$}.
\end{aligned}
\end{equation}
Thus $a_{i\eta} = 0, 1,$ or $2$, and the sum of the
entries in each column is
always $2$. The $\Gamma$ contribution of the space
$\pi^{-1}(p_1,\dots,p_n) = RG_{g,n}(\bp)$
 of metric ribbon graphs with
a prescribed perimeter $\mathbf{p}=(p_1,\dots,p_n)$ is the orbifold
polytope
$$
P_\Gamma (\mathbf{p})/\Aut(\Gamma),\qquad
P_\Gamma (\mathbf{p})= \{\mathbf{x}\in \bR_+ ^e\;|\;
A_\Gamma \mathbf{x} = \mathbf{p}\},
$$
where $\mathbf{x}=(\ell_1,\dots,\ell_e)$ is the collection of
edge lengths of the metric ribbon graph $\Gamma$. We have
\begin{equation}
\label{eq:sump}
\sum_{i\in N} p_i= \sum_{i\in N}
\sum_{\eta\in E}a_{i\eta}\ell_\eta =
2\sum_{\eta\in E}
\ell_\eta.
\end{equation}

\section{Topological recursion for the number of integral ribbon
graphs}
\label{sect:lattice}

In this section we recall the topological recursion for
the number of
 metric ribbon graphs $RG_{g,n} ^{\bZ_+}$
whose edges have
integer lengths, following \cite{CMS}.
We call such a ribbon graph
an \emph{integral ribbon graph}.
We can interpret an integral ribbon graph as
Grothendieck's \emph{dessin d'enfant} by considering
an edge of integer length as a chain of edges of length one
connected by bivalent vertices, and reinterpreting
 the notion of
$\Aut(\Gamma)$ suitably.
Since we do not go into the number theoretic aspects of
dessins, we stick to the more geometric
notion of integral ribbon graphs.

\begin{Def}
\label{def:N}
The weighted number $\big| RG_{g,n} ^{\bZ_+}(\bp)\big|$
of integral ribbon graphs with
prescribed perimeter lengths
$\bp\in\bZ_+ ^n$ is defined by
\begin{equation}
\label{eq:Ngn}
N_{g,n}(\bp) =
\big| RG_{g,n} ^{\bZ_+}(\bp)\big|
=\sum_{\substack{\Gamma {\text{ ribbon graph}}\\
{\text{of type }} (g,n)}}
\frac{\big|\{\bx\in \bZ_+ ^{e(\Gamma)}\;|\;A_\Gamma \bx = \bp\}
\big|}{|\Aut(\Gamma)|}.
\end{equation}
\end{Def}

Since the finite set
$\{\bx\in \bZ_+ ^{e(\Gamma)}\;|\;A_\Gamma \bx = \bp\}$
is a collection of lattice points in the polytope $P_\Gamma(\bp)$
with respect to the canonical integral structure $\bZ\subset
\bR$ of the real numbers, $N_{g,n}(\bp)$ can be thought of
counting the
number of \emph{lattice points}
in $RG_{g,n}(\bp)$ with a weight factor
$1/|\Aut(\Gamma)|$ for each ribbon graph.
The function $N_{g,n}(\bp)$ is a symmetric function in
$\bp = (p_1,\dots,p_n)$
because the summation runs over all ribbon graphs of topological
type $(g,n)$.

\begin{rem}
Since the integral vector $\bx$ is restricted to
take strictly positive values,
we would have $N_{g,n}(\bp) = 0$ if we were to
substitute $\bp=0$.
This normalization is natural from the point of view of
lattice point counting and Grothendieck's
\emph{dessins d'enphants}. However, we do not
make such a substitution in this paper because we
consider $\bp$ as a strictly positive integer vector.
This situation is similar to Hurwitz theory
 \cite{EMS, MZ}, where  a partition $\mu$ is
 a strictly positive integer vector that plays the role of our
 $\bp$. We note that  a different
 assignment of values was suggested in \cite{N1, N2}.
\end{rem}

For brevity of notation, we denote by $p_I = (p_i)_{i\in I}$
for a subset $I\in N=\{1,2\dots,n\}$. The cardinality of $I$ is
denoted by $|I|$. The following topological recursion
formula was proved in \cite{CMS} using the idea of
ciliation of a ribbon graph.

\begin{thm}[\cite{CMS}]
\label{thm:integralrecursion}
The number of integral ribbon graphs
with prescribed boundary lengthes satisfies the
topological recursion formula
\begin{multline}
\label{eq:integralrecursion}
p_1  N_{g,n}(p_N)
=
\half
\sum_{j=2} ^n
\Bigg[
\sum_{q=0} ^{p_1+p_j}
q(p_1+p_j-q)  N_{g,n-1}(q,p_{N\setminus\{1,j\}})
\\
+
H(p_1-p_j)\sum_{q=0} ^{p_1-p_j}
q(p_1-p_j-q)N_{g,n-1}(q,p_{N\setminus\{1,j\}})
\\
-
H(p_j-p_1)\sum_{q=0} ^{p_j-p_1}
q(p_j-p_1-q)
N_{g,n-1}(q,p_{N\setminus\{1,j\}})
\Bigg]
\\
+\half \sum_{0\le q_1+q_2\le p_1}q_1q_2(p_1-q_1-q_2)
\Bigg[
N_{g-1,n+1}(q_1,q_2,p_{N\setminus\{1\}})
\\
+\sum_{\substack{g_1+g_2=g\\
I\sqcup J=N\setminus\{1\}}} ^{\rm{stable}}
N_{g_1,|I|+1}(q_1,p_I)
N_{g_2,|J|+1}(q_2,p_J)\Bigg].
\end{multline}
Here
$$
H(x) = \begin{cases}
1 \qquad x>0\\
0 \qquad x\le 0
\end{cases}
$$
is the Heaviside function,
and the last sum is taken for all partitions
$g=g_1+g_2$ and $I\sqcup J=N\setminus \{1\}$
 subject to the stability conditions
$2g_1-1+{I}>0$ and $2g_2-1+|J|>0$.
\end{thm}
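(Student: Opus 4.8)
The plan is to prove the recursion combinatorially, via a weight-preserving edge-removal bijection realized through ciliation, exactly as the parenthetical attribution to \cite{CMS} suggests. The first step is to reinterpret the left-hand factor $p_1$: since the boundary cycle of face $1$ has integer perimeter $p_1$, there are exactly $p_1$ lattice points (``cilia'') that can be marked on it, so that $p_1 N_{g,n}(\bp)$ is the $1/|\Aut(\Gamma)|$-weighted count of pairs $(\Gamma,\text{cilium on boundary }1)$. A cilium rigidifies the graph, and an orbit--stabilizer (Burnside) argument converts the weighted sum over unmarked graphs into a sum over ciliated graphs; this is the bookkeeping device that makes the automorphism factors behave under the operations below. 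Having fixed the cilium, I would isolate the distinguished edge $e$ on which it sits, viewed from the face-$1$ side.

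Next I would run the topological dichotomy on the opposite side of $e$. In Case (a) the opposite side belongs to a different face $j\in\{2,\dots,n\}$; removing $e$ merges faces $1$ and $j$ into one, producing an integral ribbon graph of type $(g,n-1)$, and the inverse operation re-inserts an edge joining two lattice points on the merged boundary so as to split its perimeter. Parametrizing this splitting by an integer $q$ relative to the cilium produces the quadratic kernel $q(p_1+p_j-q)$, while the two Heaviside-controlled sums record the sign of $p_1-p_j$, i.e.\ the two possible distributions of the two arcs; this yields the first bracket. In Case (b) the opposite side of $e$ is again face $1$; removing $e$ either keeps the surface connected while lowering the genus, giving a graph of type $(g-1,n+1)$ with two new boundaries of perimeters $q_1,q_2$, or disconnects the surface into two stable pieces of types $(g_1,|I|+1)$ and $(g_2,|J|+1)$. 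Summing over the lattice positions of the two new boundary cilia gives the kernel $q_1q_2(p_1-q_1-q_2)$, yielding the second bracket, with the stability constraints $2g_1-1+|I|>0$ and $2g_2-1+|J|>0$ precisely excluding the unstable reconstructions.

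I would then verify that edge-removal is a weight-preserving bijection between ciliated graphs on the left and the reconstructed data on the right, matching the $\Aut$-weights on both sides and accounting for the global factor $\tfrac12$ in each bracket, which arises from the two-fold choice implicit in orienting (or ordering) the cilium-bearing edge-side, together with the exchange symmetry of the two pieces in the splitting term. The derivation of the quadratic kernels from the integer-position counts is then a short Abel-type summation once the arc lengths are parametrized, and the stated formula follows by assembling the cases.

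The hard part will not be the topological dichotomy itself but the bookkeeping of degenerate and boundary configurations: (i) loops and the coincidence of both sides of $e$ being face $1$ must be folded correctly into Case (b); (ii) contracting an edge can create bivalent vertices, which violate the valence $\ge 3$ condition and must be suppressed or reinterpreted --- this is exactly where the reading of an integral ribbon graph as a dessin, with an edge of integer length seen as a chain of unit edges through bivalent vertices, makes the edge-length an honest lattice variable and keeps the operations closed; (iii) the endpoint terms of the $q$-sums (e.g.\ $q=0$ or $q=p_1\pm p_j$) and the exact placement of the Heaviside cutoffs must be reconciled with the requirement that all edge lengths be strictly positive; and (iv) the orbit--stabilizer accounting must be confirmed in the presence of any residual symmetry surviving the ciliation. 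Assembling these into a single bijection with all constants correct is the crux; once it is in place, the recursion \eqref{eq:integralrecursion} is immediate.
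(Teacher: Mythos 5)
Your overall strategy (read $p_1N_{g,n}(\bp)$ as a weighted count of ciliated lattice graphs, delete the cilium-bearing edge, and classify by the topology of the deletion) is indeed the approach the paper attributes to \cite{CMS}; note, however, that the paper itself contains no proof of Theorem~\ref{thm:integralrecursion} --- it is quoted from \cite{CMS} --- so your plan can only be checked against the formula it must reproduce, and there it has a genuine gap. In your Case (a) the deleted edge $e$ is traversed exactly once by face $1$ and once by face $j$, so its length satisfies $\ell\le\min(p_1,p_j)$, and the merged face has perimeter $q=p_1+p_j-2\ell\ge|p_1-p_j|$. On that range the reconstruction count ($q$ positions for the reinserted edge, $\ell=(p_1+p_j-q)/2$ cilium positions on it) gives $\half\, q(p_1+p_j-q)$, consistent with (\ref{eq:integralrecursion}); but Case (a) produces \emph{no} configuration with $q<|p_1-p_j|$. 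The claimed first bracket is strictly positive there, since
\begin{align*}
\half\Big[q(p_1+p_j-q)+q(p_1-p_j-q)\Big]&=q(p_1-q), &&p_1>p_j,\quad q<p_1-p_j,\\
\half\Big[q(p_1+p_j-q)-q(p_j-p_1-q)\Big]&=q\,p_1, &&p_j>p_1,\quad q<p_j-p_1.
\end{align*}
So the Heaviside sums cannot be, as you assert, bookkeeping of ``the two possible distributions of the two arcs'' within Case (a): for fixed $p_1,p_j,q$ the two arc lengths are forced by the perimeter constraints, there is no such binary choice, and a subcase enumeration could never produce the \emph{negative} sign carried by the $H(p_j-p_1)$ term. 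The Heaviside terms encode contributions your dichotomy never generates.

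What actually produces them are precisely the degenerate configurations your plan discards. If the cilium-bearing edge is a loop $\gamma$ bounding face $j$ as a disk ($p_j=\ell_\gamma$) --- a lollipop hanging inside face $1$ --- then deleting $\gamma$ alone leaves a valence-one vertex and is not a legal move; the correct operation deletes the whole lollipop and lands in $N_{g,n-1}(q,p_{N\setminus\{1,j\}})$ with $q=p_1-p_j-2\ell<p_1-p_j$, contributing $q\,p_j$. The companion configurations, with the cilium on the stick of the same lollipop, are exactly your Case (b) disconnections in which one piece is a bare circle enclosing face $j$ --- the unstable $(0,2)$ piece excluded by the superscript ``stable'' --- and they contribute $q(p_1-p_j-q)$. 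Together these give $q\,p_j+q(p_1-p_j-q)=q(p_1-q)$, which is the missing kernel; the mirror configuration (a lollipop carrying face $1$ hanging inside face $j$) yields the $q\,p_1$ term in the regime $p_j>p_1$. In other words, the unstable splittings cannot simply be ``excluded by the stability constraints'': they correspond to honest ciliated graphs on the left-hand side, and they must reappear as the Heaviside $N_{g,n-1}$ terms, with the three sums of (\ref{eq:integralrecursion}) being an algebraic repackaging of this case analysis rather than a term-by-term image of it. Until your bijection incorporates these lollipop/circle degenerations (and, more generally, the valence-one and valence-two repairs forced by edge deletion), the two sides of (\ref{eq:integralrecursion}) do not match; this is a missing idea, not deferred bookkeeping.
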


\section{The Laplace transform of the number of
integral ribbon graphs}
\label{sect:LT}

Let us consider the \emph{Laplace transform}
 \begin{equation}
 \label{eq:Lgn}
 L_{g,n}(w_1,\dots,w_n) \overset{\rm{def}}{=}
 \sum_{\bp\in\bZ_{+} ^n} N_{g,n}(\bp) e^{-\la \bp,w\ra}
 \end{equation}
 of the number of
integral ribbon graphs $N_{g,n}(\bp)$,
where $\la \bp,w\ra=p_1w_1+\cdots+p_nw_n$, and
the summation is taken over all integer vectors $\bp\in\bZ_+^n$
of strictly positive entries.
In this section we prove that after the coordinate change
of \cite{CMS}
from the $w$-coordinates to the $t$-coordinates
defined by
\begin{equation}
\label{eq:t}
e^{-w_j} = \frac{t_j+1}{t_j-1}, \qquad j=1,2,\dots,n,
\end{equation}
 the Laplace transform
$L_{g,n}(w_N)$ becomes the Poincar\'e
polynomial
\begin{equation}
\label{eq:F}
F_{g,n}(t_1,\dots,t_n) =
L_{g,n}\big(w_1(t),\dots,w_n(t)\big).
\end{equation}

The Laplace transform $L_{g,n}(w_N)$ can be evaluated
using the definition
of the number of integral ribbon graphs
(\ref{eq:Ngn}).
Let $a_\eta$ be the $\eta$-th column of the
incidence matrix $A_\Gamma$ so that
\begin{equation}
\label{eq:Agammacolumn}
A_\Gamma = \big[a_1\big|a_2\big|\cdots\big|a_{e(\Gamma)}\big].
\end{equation}
Then

\begin{multline}
\label{eq:LinA}
L_{g,n}(w_N) =
\sum_{\bp\in\bZ_+^n}
N_{g,n}(\bp) e^{-\la \bp,w\ra}
\\
=
\sum_{\substack{\Gamma {\text{ ribbon graph}}\\
{\text{of type }} (g,n)}}
\sum_{\bp\in\bZ_+^n}
\frac{1}{|\Aut(\Gamma)|}
\big|\{\bx\in \bZ_+ ^{e(\Gamma)}\;|\;A_\Gamma \bx = \bp\}
\big|
 e^{-\la \bp,w\ra}
 \\
 =
 \sum_{\substack{\Gamma {\text{ ribbon graph}}\\
{\text{of type }} (g,n)}}
\frac{1}{|\Aut(\Gamma)|}
\sum_{\bx\in\bZ_+^{e(\Gamma)}}
e^{-\la A_\Gamma \bx,w\ra}
\\
 =
 \sum_{\substack{\Gamma {\text{ ribbon graph}}\\
{\text{of type }} (g,n)}}
\frac{1}{|\Aut(\Gamma)|}
\prod_{\substack{\eta \text{ edge}\\
\text{of }\Gamma}}
\sum_{\ell_\eta=1}^\infty
e^{-\la a_\eta,w\ra\ell_\eta}
\\
 =
 \sum_{\substack{\Gamma {\text{ ribbon graph}}\\
{\text{of type }} (g,n)}}
\frac{1}{|\Aut(\Gamma)|}
\prod_{\substack{\eta \text{ edge}\\
\text{of }\Gamma}}
\frac{
e^{-\la a_\eta,w\ra}}
{1-e^{-\la a_\eta,w\ra}}.
\end{multline}
Every edge $\eta$ bounds two faces, which we call face
$i_\eta ^+$ and face $i_\eta ^-$. When $a_{i\eta}=2$,
these faces are the same.
We then calculate
\begin{equation}
\label{eq:aw}
\frac{
e^{-\la a_\eta,w\ra}}
{1-e^{-\la a_\eta,w\ra}}
=
-z\!\left(t_{i_\eta ^+},t_{i_\eta ^-}
\right),
\end{equation}
where
\begin{equation}
\label{eq:z}
z(t_i,t_j) \overset{\rm{def}}{=}
\frac{(t_i+1)(t_j+1)}{2(t_i+t_j)}.
\end{equation}
This follows from (\ref{eq:t}) and
\begin{align*}
\frac{e^{-(w_i+w_j)}}{1-e^{-(w_i+w_j)}}
&=
\frac{\frac{(t_i+1)(t_j+1)}{(t_i-1)(t_j-1)}}
{1-\frac{(t_i+1)(t_j+1)}{(t_i-1)(t_j-1)}}
=
-\frac{(t_i+1)(t_j+1)}{2(t_i+t_j)}
=-z(t_i,t_j),
\\
\frac{e^{-2w_i}}{1-e^{-2w_i}}
&=
-\frac{(t_i+1)^2}{4t_i}
=-z(t_i,t_i).
\end{align*}
Note that since $z(t_i,t_j)$ is a symmetric
function, which face is
named $i_\eta ^+$ or $i_\eta ^-$ does not matter.
From (\ref{eq:LinA}) and (\ref{eq:aw}), we have
established
\begin{thm}
\label{thm:Finz}
The Laplace transform $L_{g,n}(w_N)$ in terms
of the $t$-coordinates {\rm{(\ref{eq:t})}}
 is the Poincar\'e polynomial
\begin{equation}
\label{eq:Finz}
F_{g,n}(t_N) =
 \sum_{\substack{\Gamma {\text{ ribbon graph}}\\
{\text{of type }} (g,n)}}
\frac{(-1)^{e(\Gamma)}}{|\Aut(\Gamma)|}
\prod_{\substack{\eta \text{ edge}\\
\text{of }\Gamma}}
z\!\left(t_{i_\eta ^+},t_{i_\eta ^-}
\right).
\end{equation}
\end{thm}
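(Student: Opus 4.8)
The plan is to compute the Laplace transform $L_{g,n}(w_N)$ directly from its definition (\ref{eq:Lgn}) together with the combinatorial formula (\ref{eq:Ngn}) for $N_{g,n}(\bp)$, reducing it to a product over the edges of each ribbon graph, and then to carry out the coordinate change (\ref{eq:t}) edge by edge. First I would substitute (\ref{eq:Ngn}) into (\ref{eq:Lgn}) and interchange the finite sum over ribbon graphs $\Gamma$ of type $(g,n)$ with the sum over $\bp\in\bZ_+^n$. The essential structural observation is that, for a fixed $\Gamma$, summing the lattice-point count $\big|\{\bx\in\bZ_+^{e(\Gamma)}:A_\Gamma\bx=\bp\}\big|$ against $e^{-\la\bp,w\ra}$ over all $\bp$ is the same as summing $e^{-\la A_\Gamma\bx,w\ra}$ over all $\bx\in\bZ_+^{e(\Gamma)}$ with no perimeter constraint, since each $\bx$ determines its perimeter vector uniquely by $\bp=A_\Gamma\bx$ and $\la\bp,w\ra=\la A_\Gamma\bx,w\ra$. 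This collapses the constrained lattice-point enumeration into an unconstrained geometric sum over edge lengths.

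Next I would factor. Writing $A_\Gamma$ in terms of its columns $a_\eta$ as in (\ref{eq:Agammacolumn}), the exponent $\la A_\Gamma\bx,w\ra$ decomposes as $\sum_\eta\la a_\eta,w\ra\,\ell_\eta$, so the multiple sum over $\bx=(\ell_\eta)$ factors into a product over edges of one-dimensional geometric series $\sum_{\ell_\eta=1}^\infty e^{-\la a_\eta,w\ra\,\ell_\eta}$. Summing each series produces the edge factor $e^{-\la a_\eta,w\ra}/\bigl(1-e^{-\la a_\eta,w\ra}\bigr)$, which is exactly the last line of (\ref{eq:LinA}). The final step is the edge-by-edge evaluation under (\ref{eq:t}). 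Here I would invoke the defining property of the edge-face incidence matrix (\ref{eq:incidence}): each column $a_\eta$ has entries summing to $2$, so $\la a_\eta,w\ra$ equals either $w_i+w_j$ when $\eta$ borders distinct faces $i=i_\eta^+$ and $j=i_\eta^-$, or $2w_i$ when both sides of $\eta$ lie on the same face. In either case, substituting $e^{-w_j}=(t_j+1)/(t_j-1)$ and simplifying gives $e^{-\la a_\eta,w\ra}/\bigl(1-e^{-\la a_\eta,w\ra}\bigr)=-z(t_{i_\eta^+},t_{i_\eta^-})$ with $z$ as in (\ref{eq:z}); these are precisely the two algebraic identities displayed after (\ref{eq:aw}). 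Taking the product over all $e(\Gamma)$ edges pulls out the sign $(-1)^{e(\Gamma)}$ and yields $\prod_\eta z(t_{i_\eta^+},t_{i_\eta^-})$, which is (\ref{eq:Finz}).

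The main obstacle is analytic rather than algebraic: one must justify the convergence of the defining series (\ref{eq:Lgn}) and the legitimacy of both the interchange of the $\Gamma$- and $\bp$-sums and the factorization into edge products. Each geometric series converges precisely when $\operatorname{Re}\la a_\eta,w\ra>0$ for every edge, that is, on a region where all $\operatorname{Re}w_j$ are sufficiently positive; on such a half-space the sum is absolutely convergent, so the reorderings are justified and the factorization is valid. The identity (\ref{eq:Finz}) then holds there as an equality of analytic functions, and since both sides are rational in the $t_j$, it persists after the coordinate change (\ref{eq:t}) by analytic continuation. I would therefore state the relevant domain of convergence explicitly at the outset and treat the subsequent manipulations as identities of convergent series on that domain before passing to the $t$-coordinates.
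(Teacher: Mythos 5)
Your proposal is correct and follows essentially the same route as the paper: substituting (\ref{eq:Ngn}) into (\ref{eq:Lgn}), collapsing the constrained lattice-point sum into an unconstrained sum over $\bx\in\bZ_+^{e(\Gamma)}$, factoring into per-edge geometric series as in (\ref{eq:LinA}), and evaluating each edge factor via the identities following (\ref{eq:aw}). The only difference is that you make the convergence domain and the analytic-continuation step explicit, which the paper leaves implicit; this is a harmless refinement, not a different argument.
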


\begin{cor}
\label{cor:F1}
The evaluation of $F_{g,n}(t_N)$ at
$t_1=\cdots=t_n=1$ gives the
Euler characteristic of $RG_{g,n}$
\begin{equation}
\label{eq:euler}
F_{g,n}(1,1\dots,1) = \chi\left(RG_{g,n}\right)
=(-1)^n\chi\left(\cM_{g,n}\right).
\end{equation}
Furthermore, if we evaluate at $t_j=-1$ for any $j$, then we have
\begin{equation}
\label{eq:Fj}
F_{g,n}(t_1,t_2\dots,t_n)\big|_{t_j=-1} = 0
\end{equation}
as a function in the rest of the variables $t_{N\setminus\{j\}}$.
\end{cor}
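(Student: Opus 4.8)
The plan is to read off both statements directly from the product formula of Theorem~\ref{thm:Finz},
$$
F_{g,n}(t_N) = \sum_{\substack{\Gamma \text{ ribbon graph}\\ \text{of type } (g,n)}} \frac{(-1)^{e(\Gamma)}}{|\Aut(\Gamma)|} \prod_{\substack{\eta \text{ edge}\\ \text{of }\Gamma}} z\!\left(t_{i_\eta^+}, t_{i_\eta^-}\right),
$$
so that everything reduces to evaluating the single building block $z(t_i,t_j)=\frac{(t_i+1)(t_j+1)}{2(t_i+t_j)}$ at the relevant arguments. For the first identity I would substitute $t_1=\cdots=t_n=1$. A one-line computation gives $z(1,1)=\frac{2\cdot 2}{2\cdot 2}=1$, so every edge factor equals $1$ and the product collapses to $1$ for each $\Gamma$. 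Hence $F_{g,n}(1,\dots,1)=\sum_\Gamma \frac{(-1)^{e(\Gamma)}}{|\Aut(\Gamma)|}$, which is precisely the weighted count defining $\chi(RG_{g,n})$ in Section~\ref{sect:intro}. The remaining equality $\chi(RG_{g,n})=(-1)^n\chi(\cM_{g,n})$ is then supplied by the orbifold isomorphism (\ref{eq:M=RG}): since the compactly supported Euler characteristic is multiplicative and $\chi_c(\bR_+)=-1$, one gets $\chi(RG_{g,n})=\chi_c(\cM_{g,n})\cdot(-1)^n=(-1)^n\chi(\cM_{g,n})$.

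For the vanishing at $t_j=-1$, the key observation is that the factor $(t_j+1)$ in the numerator of $z$ forces $z(t_i,-1)=0$ for every value of the other argument, and likewise $z(-1,-1)=\frac{0}{-4}=0$. Thus any edge adjacent to the face labelled $j$ contributes a vanishing factor once we set $t_j=-1$. Since $\Gamma$ is the $1$-skeleton of a genuine cell decomposition, the boundary of the $2$-cell labelled $j$ is a nonempty closed edge-walk, so face $j$ is incident to at least one edge. Therefore, for every ribbon graph $\Gamma$ of type $(g,n)$, the product $\prod_\eta z$ acquires at least one zero factor at $t_j=-1$, and the whole sum vanishes.

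The only point needing care is the status of the evaluation at $t_j=-1$, where the denominators $2(t_i+t_j)$ of the adjacent factors specialize to $2(t_i-1)$: these are nonzero for generic values of the surviving variables, so each relevant $z$-factor genuinely vanishes there, and the restriction $F_{g,n}|_{t_j=-1}$ --- a priori a rational function in $t_{N\setminus\{j\}}$ --- is identically zero because it vanishes on a dense open set. I expect no substantive obstacle here: the entire argument rests on the elementary combinatorial fact that every face of a ribbon graph bounds at least one edge, together with the factorization of $z$ exhibited above.
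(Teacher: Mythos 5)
Your proposal is correct and follows essentially the same route as the paper: the first identity comes from $z(1,1)=1$ collapsing every edge factor, and the vanishing at $t_j=-1$ comes from the factor $(t_j+1)$ in $z$ together with the fact that every face is incident to at least one edge. Your two additions --- justifying $\chi(RG_{g,n})=(-1)^n\chi(\cM_{g,n})$ via multiplicativity of the compactly supported Euler characteristic over the fibers $\bR_+^n$ of (\ref{eq:M=RG}), and checking that the specialized denominators $2(t_i-1)$ cause no trouble --- are sound refinements of points the paper leaves implicit.
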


\begin{proof}
The Euler characteristic calculation immediately follows
from $z(1,1) = 1$.

Consider a ribbon graph $\Gamma$ of type $(g,n)$. Its $j$-th
face
 has at least one edge on its boundary. Therefore,
$$
\prod_{\eta \text{ edge of }\Gamma}
z\!\left(t_{i_\eta ^+},t_{i_\eta ^-}
\right)
$$
has a factor $(t_j+1)$ by (\ref{eq:z}). It holds for every ribbon
graph $\Gamma$ in the summation of (\ref{eq:Finz}).
Therefore, (\ref{eq:Fj}) follows.
\end{proof}

\section{Topological recursion for the Poincar\'e
polynomials}
\label{sect:recursion}

In this section we prove that
the Poincar\'e polynomials satisfy a differential equation.

\begin{thm}
\label{thm:LTofN}
The Poincar\'e polynomial $F_{g,n}(t_N)$ satisfies the following
differential recursion equation.
\begin{multline}
\label{eq:LTofN}
\frac{\partial}{\partial t_1}F_{g,n}(t_N)
\\
=
-\frac{1}{16}
\sum_{j=2} ^n
\left[\frac{t_j}{t_1^2-t_j^2}
\left(
\frac{(t_1^2-1)^3}{t_1^2}\frac{\partial}{\partial t_1}
F_{g,n-1}(t_{N\setminus\{j\}})
-
\frac{(t_j^2-1)^3}{t_j^2}\frac{\partial}{\partial t_j}
F_{g,n-1}(t_{N\setminus\{1\}})
\right)
\right]
\\
-\frac{1}{16}
\sum_{j=2} ^n
\frac{(t_1^2-1)^2}{t_1^2}\frac{\partial}{\partial t_1}
F_{g,n-1}(t_{N\setminus\{j\}})
\\
-
\frac{1}{32}\;\frac{(t_1^2-1)^3}{t_1^2}
\frac{\partial^2}{\partial u_1\partial u_2}
\Bigg[
F_{g-1,n+1}(u_1,u_2,t_{N\setminus\{1\}})
\\
+
\sum_{\substack{g_1+g_2=g\\I\sqcup J=N\setminus\{1\}}}
^{\rm{stable}}
F_{g_1,|I|+1}(u_1,t_I)F_{g_2,|J|+1}(u_2,t_J)
\Bigg]
\Bigg|_{u_1=u_2=t_1}.
\end{multline}
\end{thm}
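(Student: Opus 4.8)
The plan is to derive \eqref{eq:LTofN} by taking the Laplace transform of the integral recursion of Theorem~\ref{thm:integralrecursion} term by term and then rewriting the result in the $t$-coordinates via $e^{-w_j}=(t_j+1)/(t_j-1)$. First I would transform the left-hand side. Since $L_{g,n}(w_N)=\sum_{\bp}N_{g,n}(\bp)e^{-\la\bp,w\ra}$, multiplying the recursion by $e^{-\la\bp,w\ra}$ and summing over $\bp\in\bZ_+^n$ converts $p_1 N_{g,n}(p_N)$ into $-\partial_{w_1}L_{g,n}$. The coordinate change gives $\frac{\partial}{\partial w_1}=\frac{t_1^2-1}{2}\frac{\partial}{\partial t_1}$, so the left-hand side becomes $-\frac{t_1^2-1}{2}\,\partial_{t_1}F_{g,n}$; dividing out this common factor at the very end is what produces the bare $\partial_{t_1}F_{g,n}$ on the left of \eqref{eq:LTofN} together with all the rational prefactors on the right.

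Next I would evaluate the Laplace transform of each group on the right. For the $j$-sums the inner expression $\sum_q q(p_1\pm p_j-q)\,N_{g,n-1}(q,\dots)$ is a discrete convolution, so after summing over $p_{N\setminus\{1,j\}}$ it factors into $\sum_q q\,N_{g,n-1}(q,\dots)e^{-qw}=-\partial_w F_{g,n-1}$ times a kernel coming from the removed edge. The remaining double sum over $p_1,p_j$ for the $(p_1+p_j)$ term is handled by the diagonal identity
\[
\sum_{p_1,p_j\ge1}e^{-p_1w_1-p_jw_j}\,c(p_1+p_j)=\frac{e^{-w_1}\hat c(w_j)-e^{-w_j}\hat c(w_1)}{e^{-w_j}-e^{-w_1}},
\]
while the two Heaviside-weighted differences $p_1\mp p_j$ bring in the kernel $\frac{e^{-(w_1+w_j)}}{1-e^{-(w_1+w_j)}}=-z(t_1,t_j)$. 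Converting $e^{\pm w}$ to $t$ and doing the partial-fraction decomposition of $\frac{1}{e^{-w_j}-e^{-w_1}}$ against $z(t_1,t_j)$ is precisely what yields the symmetric factor $\frac{t_j}{t_1^2-t_j^2}$ in the first line of \eqref{eq:LTofN} together with the separate $\frac{(t_1^2-1)^2}{t_1^2}\,\partial_{t_1}F_{g,n-1}$ term in the second line. For the last group I would set $p_1=q_1+q_2+r$ with $r\ge0$: the weights $q_1,q_2$ turn the $q_1,q_2$-sum into $\frac{\partial^2}{\partial u_1\partial u_2}$ applied to $F_{g-1,n+1}(u_1,u_2,\dots)+\sum F_{g_1}F_{g_2}$ and restricted to $u_1=u_2=t_1$, while the weight $r$ supplies the remaining edge kernel.

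The main obstacle I anticipate is the exact evaluation of these edge kernels and the simplification to the clean rational functions $\frac{(t_1^2-1)^3}{t_1^2}$ and $\frac{(t_1^2-1)^2}{t_1^2}$ in the statement. The decisive point is that the summation variable attached to the removed edge runs over \emph{even} integers: by \eqref{eq:sump} one has $\sum_i p_i=2\sum_\eta\ell_\eta$, so $N_{g,n}(\bp)$ vanishes unless $\sum_i p_i$ is even, forcing the relevant difference $p_1\pm p_j-q$ (resp. $p_1-q_1-q_2$) to be even. This parity is exactly what generates the $1/t_1^2$ factors after the coordinate change, since the even kernel evaluates to $\frac{e^{-2w}}{(1-e^{-2w})^2}=\frac{(t^2-1)^2}{16t^2}$ rather than the naive $\frac{e^{-w}}{(1-e^{-w})^2}=\frac{t^2-1}{4}$; getting these factors and their numerical constants right is where the computation must be done carefully. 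Once all kernels are written in $t$ and the common factor $-\frac{t_1^2-1}{2}$ inherited from the left-hand side is divided out, collecting the three groups reproduces the three blocks of \eqref{eq:LTofN}. No induction is needed for the statement itself, as it is a term-by-term identity between Laplace transforms; the inductive (topological) structure only enters later when \eqref{eq:LTofN} is integrated, using $F_{g,n}|_{t_1=-1}=0$ from Corollary~\ref{cor:F1}, to obtain Theorem~\ref{thm:main}.
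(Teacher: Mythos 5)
Your proposal is correct and follows essentially the same route as the paper's proof: Laplace-transform the lattice-point recursion \eqref{eq:integralrecursion} term by term, use the parity constraint coming from \eqref{eq:sump} to force the edge variable to be even (so the kernel is $\frac{1}{(e^{w}-e^{-w})^2}=\frac{(t^2-1)^2}{16t^2}$ rather than the naive one), and then change variables by \eqref{eq:t} and divide out the common factor from the left-hand side. The only difference is cosmetic bookkeeping: you evaluate the $(p_1+p_j)$-convolution by a closed diagonal identity over $p_1,p_j\ge 1$, whereas the paper substitutes $p_1+p_j-q=2\ell$ and sums the finite geometric series in $p_j$ (allowing $p_1=0$, which is harmless since both sides of the recursion vanish there); the two computations produce the same kernels and hence the same three blocks of \eqref{eq:LTofN}.
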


\begin{proof}
We first calculate the Laplace transform of
(\ref{eq:integralrecursion}) and establish a
differential equation  for $L_{g,n}(w_N)$. We then change the
variables from $w_N$ to $t_N$ using (\ref{eq:t}).
The operation we need to do is to multiply
 both sides of (\ref{eq:integralrecursion}) by $e^{-\la \bp,w\ra}$
and take the sum with respect to all integers $p_1\ge 0$
and $p_{N\setminus\{1\}}\in \bZ_+^{n-1}$.
Since the left-hand side of (\ref{eq:integralrecursion}) is
$p_1N_{g,n}(p_N)$, we can allow $p_1=0$ in the
summation.

The result of this operation to the left-hand side of
(\ref{eq:integralrecursion}) is $-\frac{\partial}{\partial w_1}
L_{g,n}(w_N)$. The operation applied to
the  first line of the right-hand side gives
\begin{multline}
\label{eq:1st}
\sum_{j=2} ^n
\sum_{p_1=0}^\infty
\sum_{p_{N\setminus\{1\}}\in\bZ_{+}^{n-1}}
\sum_{q=0} ^{p_1+p_j} q\;\frac{p_1+p_j-q}{2}\;
N_{g,n-1}(q,p_{N\setminus\{1,j\}})
e^{-\la \bp,w\ra}
\\
=
\sum_{j=2} ^n \sum_{q=0} ^\infty
\sum_{p_{N\setminus\{1,j\}}\in\bZ_{+} ^{n-2}}
q\;N_{g,n-1}(q,p_{N\setminus\{1,j\}})
e^{-\la p_{N\setminus\{1,j\}}, w_{N\setminus\{1,j\}}\ra}
e^{-qw_1}
\\
\times
\sum_{\ell = 0} ^\infty
\ell e^{-2\ell w_1} \sum_{p_j=1} ^{q+2\ell} e^{p_j(w_1-w_j)},
\end{multline}
where
we set $p_1+p_j-q=2\ell$. Note that $N_{g,n}(p_N)=0$
unless $p_1+\cdots+p_n$ is even, because of
(\ref{eq:sump}). Therefore, in the Laplace
transform we are summing
over all $p_N\in\bZ_{+}^n$ such that $p_1+\cdots+p_n\equiv 0
\mod 2$.
Since
$N_{g,n-1}(q,p_{N\setminus\{1,j\}})=0$ unless
$q+p_2+\cdots +\widehat{p_j}+\cdots +p_n\equiv 0 \mod 2$,
only those $p_1, p_j$ and $q$ satisfying
$p_1+p_j-q\equiv 0\mod 2$ contribute in the summation.
Thus we can replace $p_1+p_j-q$ by $2\ell$.
The  $p_j$-summation of (\ref{eq:1st}) gives
\begin{multline*}
\sum_{p_j=1} ^{q+2\ell}e^{-qw_1}\ell e^{-2\ell w_1}
  e^{p_j(w_1-w_j)}
=
\ell e^{-(q+2\ell) w_1}
\frac{e^{w_1-w_j}- e^{(1+q+2\ell)(w_1-w_j)}}
{1-e^{w_1-w_j}}
\\
=
\frac{e^{w_1-w_j}}{1-e^{w_1-w_j}}
\left(
e^{-qw_1}\ell e^{-2\ell w_1}
-e^{-qw_j}\ell e^{-2\ell w_j}
\right).
\end{multline*}
Since the $\ell$-summation and the $q$-summation are separated
now,
(\ref{eq:1st}) becomes
\begin{multline*}
\sum_{j=2}^n
\frac{e^{w_1-w_j}}{1-e^{w_1-w_j}}
\Bigg[
\frac{1}{(e^{w_1}-e^{-w_1})^2}
\left(
-\frac{\partial}{\partial w_1}
\right)
L_{g,n-1}(w_{N\setminus\{j\}})
\\
-
\frac{1}{(e^{w_j}-e^{-w_j})^2}
\left(
-\frac{\partial}{\partial w_j}
\right)
L_{g,n-1}(w_{N\setminus\{1\}})
\Bigg].
\end{multline*}
The second line of (\ref{eq:integralrecursion}) gives
\begin{multline*}
\sum_{j=2} ^n
\sum_{p_1=0}^\infty
\sum_{p_{N\setminus\{1\}}\in\bZ_{+}^{n-1}}
H(p_1-p_j)
\sum_{q=0} ^{p_1-p_j} q\;\frac{p_1-p_j-q}{2}
N_{g,n-1}(q,p_{N\setminus\{1,j\}})
e^{-\la \bp,w\ra}
\\
=
\sum_{j=2} ^n
\sum_{\ell=0} ^\infty \ell e^{-2\ell w_1}
\sum_{p_j=1} ^\infty
 e^{-p_j(w_1+w_j)}
\\
\times
\sum_{q=0} ^\infty e^{-qw_1}
\sum_{p_{N\setminus\{1,j\}}\in\bZ_{+} ^{n-2}}
q\;N_{g,n-1}(q,p_{N\setminus\{1,j\}})
e^{-\la p_{N\setminus\{1,j\}},w_{N\setminus\{1,j\}}\ra}
\\
=
\sum_{j=2} ^n
\frac{e^{-(w_1+w_j)}}{1-e^{-(w_1+w_j)}}\;
\frac{1}{(e^{w_1}-e^{-w_1})^2}
\left(
-\frac{\partial}{\partial w_1}
\right)
L_{g,n-1}(w_{N\setminus\{j\}})
\\
=
\sum_{j=2} ^n
\left(
\frac{1}{1-e^{-(w_1+w_j)}}-1
\right)
\frac{1}{(e^{w_1}-e^{-w_1})^2}
\left(
-\frac{\partial}{\partial w_1}
\right)
L_{g,n-1}(w_{N\setminus\{j\}}),
\end{multline*}
where we set $p_1-p_j-q = 2\ell$.
Similarly, after putting
$p_j-p_1-q=2\ell$, the third line of
(\ref{eq:integralrecursion}) yields
\begin{multline*}
-\sum_{j=2} ^n
\sum_{p_1=0}^\infty
\sum_{p_{N\setminus\{1\}}\in\bZ_{+}^{n-1}}
 H(p_j-p_1)
\sum_{q=0} ^{p_j-p_1} q\;\frac{p_j-p_1-q}{2}
N_{g,n-1}(q,p_{N\setminus\{1,j\}})
e^{-\la \bp,w\ra}
\\
=
-\sum_{j=2} ^n
\sum_{p_1=0} ^\infty
  e^{-p_1(w_1+w_j)}
\sum_{\ell=0} ^\infty
\ell e^{-2\ell w_j}
\\
\times
\sum_{q=0} ^\infty e^{-qw_j}
\sum_{p_{N\setminus\{1,j\}}\in\bZ_{+} ^{n-2}}
q\;N_{g,n-1}(q,p_{N\setminus\{1,j\}})
e^{-\la p_{N\setminus\{1,j\}},w_{N\setminus\{1,j\}}\ra}
\\
=
-\sum_{j=2} ^n
\frac{1}{1-e^{-(w_1+w_j)}}\;
\frac{1}
{(e^{w_j}-e^{-w_j})^2}
\left(
-\frac{\partial}{\partial w_j}
\right)
L_{g,n-1}(w_{N\setminus\{1\}}).
\end{multline*}
Summing all contributions, we obtain
\begin{multline*}
\sum_{j=2}^n
\left(
\frac{e^{w_1-w_j}}{1-e^{w_1-w_j}}
+
\frac{1}{1-e^{-(w_1+w_j)}}
\right)
\Bigg[
\frac{1}{(e^{w_1}-e^{-w_1})^2}
\left(
-\frac{\partial}{\partial w_1}
\right)
L_{g,n-1}(w_{N\setminus\{j\}})
\\
-
\frac{1}{(e^{w_j}-e^{-w_j})^2}
\left(
-\frac{\partial}{\partial w_j}
\right)
L_{g,n-1}(w_{N\setminus\{1\}})
\Bigg]
\\
-
\frac{1}{(e^{w_1}-e^{-w_1})^2}
\left(
-\frac{\partial}{\partial w_1}
\right)
\sum_{j=2}^n
L_{g,n-1}(w_{N\setminus\{j\}}).
\end{multline*}
To compute the result of our operation to the fourth and
the fifth lines of
(\ref{eq:integralrecursion}), we note that for any
function $f(q_1,q_2)$ we have
\begin{multline*}
\half \sum_{p_1=0} ^\infty \sum_{0\le q_1+q_2\le p_1}
q_1q_2(p_1-q_1-q_2)e^{-p_1w_1}f(q_1,q_2)
\\
=
\half \sum_{q_1=0}^\infty
\sum_{q_2=0}^\infty
\sum_{\ell=0} ^\infty
2\ell e^{-2\ell w_1}e^{-(q_1+q_2)w_1}q_1q_2f(q_1,q_2)
\\
=
\frac{1}{(e^{w_1}-e^{-w_1})^2}
\left.\frac{\partial^2}{\partial u_1\partial u_2}
\widehat{f}(u_1,u_2)\right|_{u_1=u_2=w_1},
\end{multline*}
where we set $p_1-q_1-q_2=2\ell$, and
$$
\widehat{f}(u_1,u_2)=\sum_{q_1=1} ^\infty \sum_{q_2=1}
^\infty f(q_1,q_2) e^{-(q_1 u_1+q_2 u_2)}.
$$
The reason that  $p_1-q_1-q_2$ is even comes from
the fact that we are summing over $p_N\in\bZ_{+}^n$
subject to $p_1+\cdots+p_n\equiv 0\mod 2$,
while in the fourth line of (\ref{eq:integralrecursion})
contributions vanish unless $q_1+q_2+p_2+\cdots+p_n
\equiv 0\mod 2$. Therefore, we can
restrict the summation over those $p_1,q_1$ and
$q_2$ subject to $p_1\equiv q_1+q_2\mod 2$.
The same condition can be imposed on the
summation for the fifth line of (\ref{eq:integralrecursion}).

Adding all the above, we establish
\begin{multline}
\label{eq:LTNinw}
\frac{\partial}{\partial w_1} L_{g,n}(w_N)
\\
=
\sum_{j=2}^n
\left(
\frac{e^{w_1-w_j}}{1-e^{w_1-w_j}}
+
\frac{1}{1-e^{-(w_1+w_j)}}
\right)
\Bigg[
\frac{1}{(e^{w_1}-e^{-w_1})^2}
\;\frac{\partial}{\partial w_1}
L_{g,n-1}(w_{N\setminus\{j\}})
\\
-
\frac{1}{(e^{w_j}-e^{-w_j})^2}
\;\frac{\partial}{\partial w_j}
L_{g,n-1}(w_{N\setminus\{1\}})
\Bigg]
\\
-
\sum_{j=2}^n
\frac{1}{(e^{w_1}-e^{-w_1})^2}
\;\frac{\partial}{\partial w_1}
L_{g,n-1}(w_{N\setminus\{j\}})
\\
-
\frac{1}{(e^{w_1}-e^{-w_1})^2}\;
\frac{\partial^2}{\partial u_1\partial u_2}
\Bigg[
L_{g-1,n+1}(u_1,u_2,w_{N\setminus\{1\}})
\\
+
\sum_{\substack{g_1+g_2=g\\I\sqcup J=N\setminus\{1\}}}
^{\rm{stable}}
L_{g_1,|I|+1}(u_1,w_I)L_{g_2,|J|+1}(u_2,w_J)
\Bigg]
\Bigg|_{u_1=u_2=w_1}.
\end{multline}
From (\ref{eq:t}) we find
\begin{align*}
\frac{\partial}{\partial w_j}
&=\frac{t_j^2-1}{2}\;\frac{\partial}{\partial t_j}
\\
\frac{1}{(e^{w_j}-e^{-w_j})^2}
&=
\frac{1}{16}\;\frac{(t_j^2-1)^2}{t_j^2}
\\
\frac{e^{w_1-w_j}}{1-e^{w_1-w_j}}
+
\frac{1}{1-e^{-(w_1+w_j)}}
&=
-\frac{t_j(t_1^2-1)}{t_1^2-t_j^2}.
\end{align*}
It is now a straightforward calculation to convert
(\ref{eq:LTNinw}) to
(\ref{eq:LTofN}).
\end{proof}

\section{Initial values}
\label{sect:initial}

In this section we calculate the initial values
$F_{0,3}(t_1,t_2,t_3)$ and $F_{1,1}(t)$.

\begin{figure}[htb]
\centerline{\epsfig{file=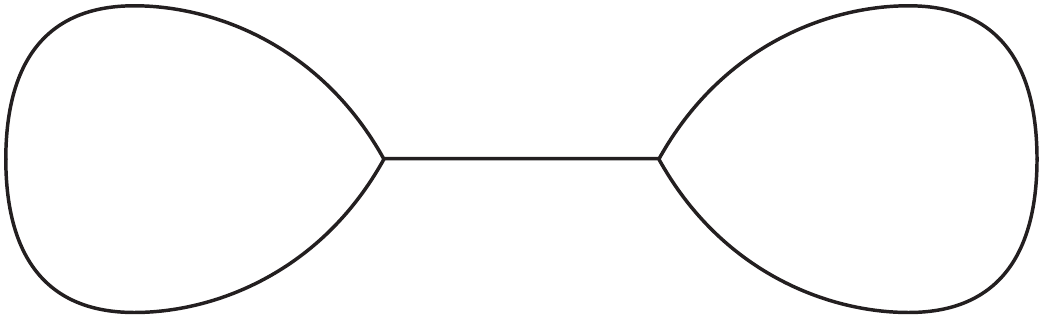, height=0.5in}
\quad
\epsfig{file=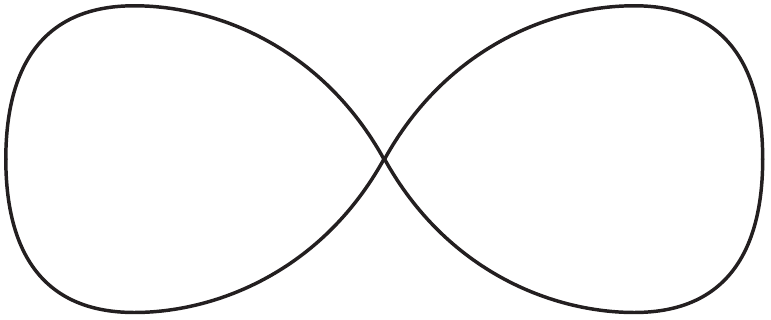, height=0.5in}
\quad
\epsfig{file=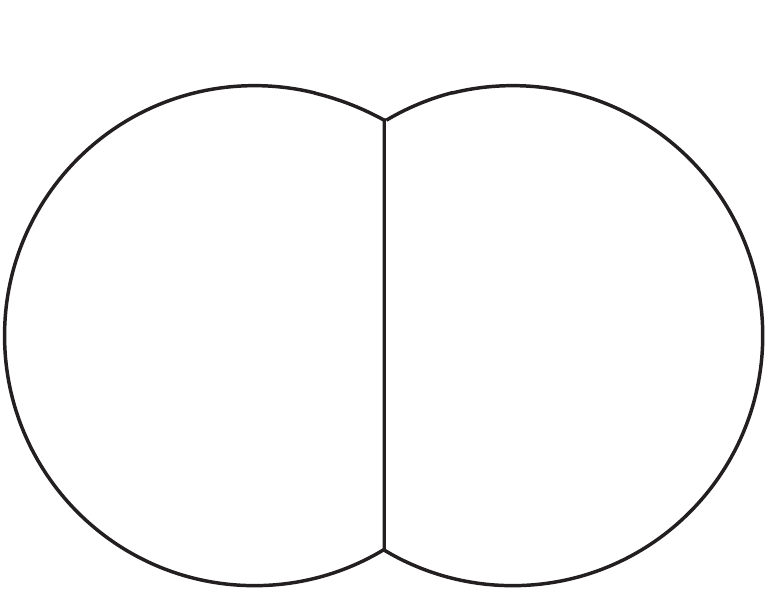, height=0.6in}}
\caption{Three kinds of ribbon graphs of type $(0,3)$.}
\label{fig:03}
\end{figure}

There are three kinds of ribbon graphs of type $(g,n)=(0,3)$ as
listes in Figure~\ref{fig:03}. Each graph has no
nontrivial automorphisms
since every face is fixed.
Therefore, we have
\begin{multline}
\label{eq:F03}
F_{0,3}(t_1,t_2,t_3)
\\
=
(-1)^3
\bigg(
z(t_1,t_1)z(t_1,t_2)z(t_1,t_3)
+z(t_2,t_2)z(t_2,t_1)z(t_2,t_3)
+z(t_3,t_3)z(t_3,t_1)z(t_3,t_2)
\bigg)
\\
+(-1)^2
\bigg(
z(t_1,t_2)z(t_1,t_3)
+z(t_2,t_1)z(t_2,t_3)
+z(t_3,t_1)z(t_3,t_2)
\bigg)
\\
+(-1)^3z(t_1,t_2)z(t_1,t_3)z(t_2,t_3)
\\
=
-\frac{1}{16}(t_1+1)(t_2+1)(t_3+1)
\left(
1+\frac{1}{t_1\;t_2\;t_3}
\right).
\end{multline}
The first line of the right-hand side of (\ref{eq:F03})
corresponds to the dumbbell shape (the left graph
of Figure~\ref{fig:03}), the second line to
the infinity sign (the center graph of Figure~\ref{fig:03}),
and the third line to the right graph of Figure~\ref{fig:03}.

\begin{figure}[htb]
\centerline{
\epsfig{file=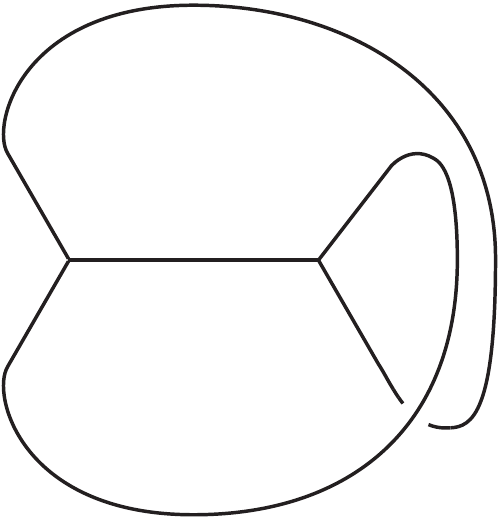, height=0.7in}
\hskip1in
\epsfig{file=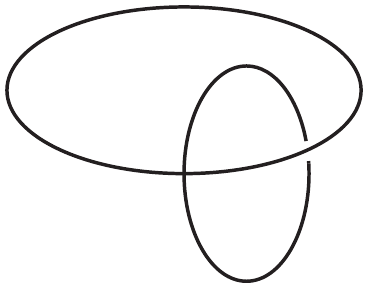, height=0.7in}}
\caption{Two kinds of ribbon graphs of type $(1,1)$.}
\label{fig:11}
\end{figure}

There are two graphs of type $(g,n)=(1,1)$, as shown in
Figure~\ref{fig:11}. The graph on the left has automorphism
group $\bZ/6\bZ$, and the graph on the right has automorphism
group $\bZ/4\bZ$. Thus we have
\begin{equation}
\label{eq:F11}
F_{1,1}(t)
=
\frac{(-1)^3}{6} z(t,t)^3 +\frac{(-1)^2}{4}z(t,t)^2
=
-\frac{1}{384} \;\frac{(t+1)^4}{t^2}\left(t-4+\frac{1}{t}\right).
\end{equation}

\section{Consequences of the differential equation}
\label{sect:consequences}

Since  (\ref{eq:LTofN})
is a differential equation, we need to determine the
initial condition with respect to the variable
$t_1$ in order to uniquely solve it for
$F_{g,n}(t_N)$.
 In this section, we prove Theorem~\ref{thm:main}
 by determining the initial value for the differential
 equation (\ref{eq:LTofN}).

\begin{thm}
\label{thm:unique}
The Poincar\'e polynomial $F_{g,n}(t_N)$ is uniquely
determined by the differential equation
{\rm{(\ref{eq:LTofN})}} and the
vanishing property {\rm{(\ref{eq:Fj})}}.
\end{thm}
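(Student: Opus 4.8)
The plan is to argue by induction on the topological complexity $2g-2+n$, treating the differential equation \eqref{eq:LTofN} (Theorem~\ref{thm:LTofN}) and the vanishing property \eqref{eq:Fj} (Corollary~\ref{cor:F1}) as established. The first and decisive observation is that the recursion is \emph{well-founded}: every Poincar\'e polynomial appearing on the right-hand side of \eqref{eq:LTofN} has complexity strictly smaller than $2g-2+n$. Indeed, the terms $F_{g,n-1}$ have complexity $2g-3+n$; the term $F_{g-1,n+1}$ has complexity $2(g-1)-2+(n+1)=2g-3+n$; and in each product $F_{g_1,|I|+1}F_{g_2,|J|+1}$ the stability conditions $2g_i-1+|I_i|>0$ force each factor to have complexity at least $1$, while the two complexities sum to $2g-3+n$, so each factor has complexity at most $2g-4+n<2g-2+n$. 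Hence the right-hand side depends only on data of strictly lower complexity.

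Granting this, the inductive step is immediate. For $2g-2+n\ge 2$ the inductive hypothesis determines all lower-complexity polynomials on the right-hand side of \eqref{eq:LTofN}, so that the entire right-hand side is a fixed known function $R(t_N)$. The equation then reads $\partial F_{g,n}/\partial t_1 = R(t_N)$, which determines $F_{g,n}$ up to an additive function of $t_{N\setminus\{1\}}$ alone. This remaining freedom is eliminated by the single initial condition supplied by \eqref{eq:Fj} with $j=1$, namely $F_{g,n}(t_N)\big|_{t_1=-1}=0$. Integrating from $-1$ to $t_1$ yields the unique solution
\[
F_{g,n}(t_N)=\int_{-1}^{t_1} R(t,t_{N\setminus\{1\}})\,dt,
\]
which is precisely the integrated form \eqref{eq:main} of Theorem~\ref{thm:main}. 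The base of the induction consists of the two cases with $2g-2+n=1$, namely $(g,n)=(0,3)$ and $(g,n)=(1,1)$; these are the initial values computed explicitly in Section~\ref{sect:initial}, for which the recursion contributes no stable terms.

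The step I expect to require the most care — the main obstacle — is the legitimacy of the $t_1$-integration across the \emph{apparent} singularities of the integrand. The factors $t_j/(t_1^2-t_j^2)$ in \eqref{eq:LTofN} have poles at $t_1=\pm t_j$, so a priori $\int_{-1}^{t_1}R\,dt$ need not be finite or single-valued. To see that it is, I would invoke the fact that $F_{g,n}$, defined by the finite sum \eqref{eq:Finz} over ribbon graphs, is a genuine symmetric rational function regular along $t_1=\pm t_j$; consequently its derivative $R$ carries no residue there and the indefinite integral is a well-defined rational function. Alternatively, one checks directly that the apparent poles of $R$ cancel between the first line and the remaining contributions of \eqref{eq:LTofN}. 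Finally, although the construction singles out the variable $t_1$ while $F_{g,n}$ is manifestly symmetric in $t_N$, no separate symmetry argument is needed: the symmetric function $F_{g,n}$ already satisfies both \eqref{eq:LTofN} and \eqref{eq:Fj}, so the uniqueness just established identifies it with the solution produced by the recursion.
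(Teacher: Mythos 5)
Your proposal is correct and follows essentially the same route as the paper: induction on $2g-2+n$ (with $(0,3)$ and $(1,1)$ as initial data), noting that the right-hand side of \eqref{eq:LTofN} involves only strictly lower complexity, then integrating $\partial F_{g,n}/\partial t_1$ from $t_1=-1$ so that the vanishing property \eqref{eq:Fj} fixes the otherwise free additive function of $t_{N\setminus\{1\}}$, which is exactly the paper's construction of \eqref{eq:integral}. Your additional concern about the apparent poles at $t_1=\pm t_j$ is not needed for uniqueness itself (two solutions with equal $t_1$-derivative and equal value at $t_1=-1$ coincide as rational functions), and the pole cancellation you mention is precisely what the paper establishes separately in the proof of Theorem~\ref{thm:Laurent}.
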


\begin{proof}
Suppose we have determined $F_{g,n}(t_N)$ for all
values of $(g,n)$ subject to
$$
0<2g-2+n<m-1
$$
for a given
$m\ge 2$. Take any $(g,n)$ such that $2g-2+n=m$.
Then  (\ref{eq:LTofN})
determines
$\frac{\partial}{\partial t_1}F_{g,n}(t_N)$.
We denote by $r(t_N)$ the right-hand side of
 (\ref{eq:LTofN}),
and \emph{define}
\begin{equation}
\label{eq:integral}
F_{g,n}(t_N) = \int_{-1}^{t_1} r(t_N) dt_1.
\end{equation}
The lower bound is chosen so that (\ref{eq:Fj}) holds.
Since $F_{g,n}(-1,t_2,\dots,t_n)=0$ as a function
in $t_{N\setminus\{1\}}$, there is no room to add
any function in $t_{N\setminus\{1\}}$ to the
right-hand side of (\ref{eq:integral}).
We have thus uniquely determined $F_{g,n}(t_N)$.
This completes the proof.
\end{proof}

Since formula  (\ref{eq:integral}) is  (\ref{eq:main}),
we have thus proved Theorem~\ref{thm:main}.

The definition of the Poincar\'e polynomial
(\ref{eq:Poincare}) contains a factor like
$\frac{1}{t_i+t_j}$. Surprisingly,
$F_{g,n}(t_N)$  is indeed a \emph{Laurent polynomial}.

\begin{thm}
\label{thm:Laurent}
The Poincar\'e polynomial $F_{g,n}(t_N)$ is a Laurent
polynomial in $t_1,t_2,\dots, t_n$ of degree $3(2g-2+n)$.
Moreover, every monomial appearing in $F_{g,n}(t_N)$
contains only an odd power of each $t_j$.
\end{thm}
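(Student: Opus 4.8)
The plan is to prove Theorem~\ref{thm:Laurent} by induction on $m = 2g-2+n$, using the differential recursion \eqref{eq:LTofN} together with the integration scheme \eqref{eq:integral} and the vanishing boundary condition \eqref{eq:Fj}. For the base cases $m=1$, namely $(g,n)=(0,3)$ and $(1,1)$, I would simply inspect the explicit formulas \eqref{eq:F03} and \eqref{eq:F11}: $F_{0,3}$ has degree $3 = 3(2\cdot 0 - 2 + 3)$ and its expansion $-\frac{1}{16}(t_1+1)(t_2+1)(t_3+1)(1 + (t_1t_2t_3)^{-1})$ manifestly contains only odd powers of each $t_j$ (each variable appears with exponent $\pm 1$), and likewise $F_{1,1}(t) = -\frac{1}{384}\frac{(t+1)^4}{t^2}(t - 4 + t^{-1})$ expands as a Laurent polynomial of degree $3 = 3(2\cdot 1 - 2 + 1)$ with only odd powers of $t$.

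For the inductive step, assume the claim holds for all stable $(g',n')$ with $2g'-2+n' < m$, and fix $(g,n)$ with $2g-2+n = m$. The strategy is to show that the right-hand side $r(t_N)$ of \eqref{eq:LTofN}, when integrated in $t_1$ from $-1$, produces a Laurent polynomial with the asserted degree and odd-power properties. First I would verify that each term on the right-hand side, viewed as a function of $t_1$, is a Laurent polynomial. The genuinely suspicious term is the first sum, which carries the factor $\frac{t_j}{t_1^2 - t_j^2}$; the key point is that the bracketed expression multiplying it, namely
\begin{equation*}
\frac{(t_1^2-1)^3}{t_1^2}\,\partial_{t_1} F_{g,n-1}(t_{N\setminus\{j\}})
-
\frac{(t_j^2-1)^3}{t_j^2}\,\partial_{t_j} F_{g,n-1}(t_{N\setminus\{1\}}),
\end{equation*}
vanishes when $t_1 = t_j$ (and, by symmetry in the relevant sense, when $t_1 = -t_j$), so that the apparent pole along $t_1^2 = t_j^2$ is cancelled. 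This cancellation is the crux of the Laurentness: since $F_{g,n-1}$ is by induction a Laurent polynomial symmetric in its arguments, substituting $t_1 = t_j$ into the bracket makes the two terms coincide, leaving a factor $(t_1^2 - t_j^2)$ that cancels the denominator.

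Once Laurentness of $r(t_N)$ in $t_1$ is established, integrating term by term from $-1$ to $t_1$ preserves the Laurent polynomial property, since the antiderivative of a Laurent polynomial in $t_1$ is again a Laurent polynomial (the potential logarithmic obstruction, a $t_1^{-1}$ term in $r$, must be checked to be absent — this follows from tracking the parity of powers). To handle the degree and parity bookkeeping simultaneously, I would introduce the induction hypothesis in the sharp form that each $F_{g',n'}$ is homogeneous-free but has top degree exactly $3(2g'-2+n')$ with every variable appearing to odd order, and then verify that every operation appearing in \eqref{eq:LTofN} — the differential operators $\partial_{t_1}$, $\partial_{t_j}$, $\partial^2_{u_1 u_2}$, multiplication by the rational prefactors $\frac{(t_1^2-1)^k}{t_1^2}$, the pole-cancelled division, and finally the integration $\int_{-1}^{t_1}dt_1$ — transforms the degree by the correct amount and respects the odd-power condition in each variable. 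A careful accounting: the operators raise the degree in $t_1$ so that after integration the total degree lands at $3(2g-2+n)$, matching the drop $m \mapsto m-1$ in the recursion's source terms.

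The main obstacle will be the degree-and-parity bookkeeping in the last block, involving $F_{g-1,n+1}$ and the splitting sum $\sum^{\mathrm{stable}} F_{g_1,|I|+1}F_{g_2,|J|+1}$, where two separate variables $u_1,u_2$ are differentiated and then set equal to $t_1$. Here I must confirm that applying $\partial_{u_1}\partial_{u_2}$ and setting $u_1 = u_2 = t_1$ — which folds two odd-powered dependences into a single variable $t_1$ — combines with the prefactor $\frac{(t_1^2-1)^3}{t_1^2}$ and the final integration to yield, once more, only odd powers of $t_1$. The parity tracking through this coincidence limit is the delicate part: an odd power in $u_1$ and an odd power in $u_2$ each drop by one under differentiation, becoming even, but their product upon identifying $u_1 = u_2 = t_1$ then has even total degree, and it is precisely the interplay with the odd-degree prefactor $\frac{(t_1^2-1)^3}{t_1^2}$ (whose expansion contains $t_1^{\pm}$ to odd powers) and with the final integration in $t_1$ that must restore oddness. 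Verifying that these parities conspire correctly in every term, and that no $t_1^{-1}$ residue survives to obstruct integration, is where the real work lies; the remaining terms in the recursion are handled by the same, somewhat simpler, parity and degree count.
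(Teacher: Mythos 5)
Your overall strategy coincides with the paper's own proof: induction on $2g-2+n$, applied to the right-hand side $r(t_N)$ of the differential recursion \eqref{eq:LTofN}, with the two dangers being the denominator $t_1^2-t_j^2$ and the possibility of logarithms from the integration \eqref{eq:integral}. Your pole-cancellation step is equivalent to the paper's key lemma (for $h$ a Laurent polynomial in $x^2$, the divided difference $\frac{h(x)-h(y)}{x^2-y^2}$ is a Laurent polynomial in $x^2$ and $y^2$). One caution there: vanishing of the bracket at $t_1=t_j$ does follow from the symmetry of $F_{g,n-1}$ alone, but vanishing at $t_1=-t_j$ does not; it requires the inductive parity hypothesis, which makes $\frac{(t_1^2-1)^3}{t_1^2}\,\partial_{t_1}F_{g,n-1}$ an even function of $t_1$. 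Your phrase ``by symmetry in the relevant sense'' is hiding exactly the point at issue.

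The genuine gap is in the parity bookkeeping, which you yourself flag as ``where the real work lies'' but then get wrong in the one concrete claim you make. The prefactor $\frac{(t_1^2-1)^3}{t_1^2}=t_1^4-3t_1^2+3-t_1^{-2}$ contains only \emph{even} powers of $t_1$, not odd ones as you assert. The clean statement that makes the whole proof work is: every term of $r(t_N)$ contains only even powers of $t_1$ (and no nonzero even powers of $t_j$, $j\ge 2$) --- the divided-difference bracket is a Laurent polynomial in $t_1^2$; the folded term $\partial_{u_1}\partial_{u_2}(\cdots)\big|_{u_1=u_2=t_1}$ is even in $t_1$ because each differentiation turns odd powers into even ones; and multiplying by the even prefactor preserves evenness. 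This single fact does both jobs at once: it rules out a $t_1^{-1}$ term, so $\int_{-1}^{t_1}$ produces no logarithm, and it forces the antiderivative to consist of odd powers of $t_1$ (plus constants). Had you followed your stated parities through (even folded term times a supposedly odd prefactor), the integrand would be odd in $t_1$, and integration would yield even powers and a potential logarithm --- the opposite of what the theorem asserts. Finally, note that ``only odd powers'' must be read as ``no nonzero even powers'' throughout: constant terms genuinely occur (the term $-16$ in $-384\,F_{1,1}$, the term $\frac{128}{3}$ in $2^{11}F_{1,2}$, and monomials of $F_{0,3}$ such as $t_1t_2$ in which $t_3$ has exponent $0$), so your claim that in $F_{0,3}$ ``each variable appears with exponent $\pm1$'' is incorrect; this reading is harmless for the induction, since constant terms are annihilated by the derivatives, but the inductive hypothesis must be stated so as to allow exponent zero.
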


\begin{proof}
Here again suppose the statement is true for all
values of $(g,n)$ subject to
$$
0<2g-2+n<m-1
$$
for a given
$m\ge 2$. Take an arbitrary $(g,n)$ such that
$2g-2+n=m$. Let $r(t_N)$ denote the right-hand side
of (\ref{eq:LTofN}).
There are two issues we need to address. The first one
is division by $(t_1^2-t_j^2)$ in the first line of
$r(t_N)$, since $\frac{1}{t_1^2-t_j^2}$ is not a
Laurent polynomial. The second issue is the
integration (\ref{eq:integral}), which could produce
logarithmic terms.

\begin{lem}
Consider a Laurent polynomial in one variable $f(x)$ that
contains only odd powers of $x$. Then
\begin{equation}
\label{eq:fx}
\frac{y}{x^2-y^2}
\left(
\frac{(x^2-1)^3}{x^2}\frac{\partial}{\partial x}\;f(x)
-
\frac{(y^2-1)^3}{y^2}\frac{\partial}{\partial y}\;f(y)
\right)
\end{equation}
is a Laurent polynomial in $x$ and $y$ such that each
monomial contains only an even power of $x$ and an odd power
of $y$.
\end{lem}

\noindent
If $h(x)$ is a Laurent polynomial in $x^2$, then
$\frac{h(x)-h(y)}{x^2-y^2}$ is a Laurent polynomial
in $x^2$ and $y^2$. Therefore,
$$
\frac{1}{x^2-y^2}
\left(
\frac{(x^2-1)^3}{x^2}\frac{\partial}{\partial x}\;f(x)
-
\frac{(y^2-1)^3}{y^2}\frac{\partial}{\partial y}\;f(y)
\right)
$$
is a Laurent polynomial in $x^2$ and $y^2$. This proves
the lemma.

Thus we know that $r(t_N)$ is a Laurent polynomial
in $t_1,\dots, t_n$ such that each monomial contains
an even power of $t_1$ and an odd powers of $t_j$ for
every $j>1$. Therefore,
$$
F_{g,n}(t_N) = \int_{-1}^{t_1} r(t_N)dt_1
$$
is a Laurent polynomial in $t_1, \dots, t_n$ such that every
monomial term
contains only an odd power of each $t_j$. This completes the
proof of the theorem.
\end{proof}

Based on the work \cite{BCSW},
 it is noted in \cite{CMS} that the symmetric
 homogeneous polynomial
in $t_1,\dots,t_n$ consisting of
the leading terms of
$$
2^{5g-5+2n}\;\frac{\partial^n}{\partial t_1\cdots\partial t_n}
 F_{g,n}(t_N)
 $$
 is the
generating function of the $\psi$-class intersection numbers
on the Deligne-Mumford stack $\Mbar_{g,n}$
considered in \cite{DVV, K1992,W1991}, and that
the restriction of the recursion (\ref{eq:LTofN})
to the leading terms, after taking the differentiation with respect
to $t_2, \dots, t_n$, is equivalent to the Virasoro constraint
condition of the $\psi$-class intersection numbers.
This proves Theorem~\ref{thm:appl}.

Although we do not utilize the following fact in this paper,
we note that the Laurent polynomial $F_{g,n}(t_N)$
is invariant under the coordinate change
$t_j\longmapsto\frac{1}{t_j}$. This is because
$$
z(t_i,t_j) = z\!\left(\frac{1}{t_i},\frac{1}{t_j}\right).
$$

\begin{prop}
The Poincar\'e polynomial is invariant under the
transformation $t_j\longmapsto\frac{1}{t_j}$
$$
F_{g,n}(t_1,t_2,\dots,t_n) = F_{g,n}\!
\left(
\frac{1}{t_1},\frac{1}{t_2},\dots,\frac{1}{t_n}
\right).
$$
\end{prop}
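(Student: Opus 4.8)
The plan is to exploit the closed-form expression for $F_{g,n}(t_N)$ obtained in Theorem~\ref{thm:Finz}, rather than the differential recursion (\ref{eq:LTofN}), since the asserted invariance is a term-by-term symmetry of the edge product and has nothing to do with the inductive structure on $2g-2+n$. By Theorem~\ref{thm:Finz},
\[
F_{g,n}(t_N) =
\sum_{\substack{\Gamma \text{ ribbon graph}\\ \text{of type }(g,n)}}
\frac{(-1)^{e(\Gamma)}}{|\Aut(\Gamma)|}
\prod_{\eta}
z\!\left(t_{i_\eta^+}, t_{i_\eta^-}\right),
\]
a Laurent polynomial assembled entirely from the elementary factors $z(t_i,t_j)$, one for each edge $\eta$ bounding the two faces $i_\eta^+$ and $i_\eta^-$. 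Since the simultaneous substitution $t_k\mapsto 1/t_k$ for all $k\in N$ sends each such factor to $z(1/t_{i_\eta^+}, 1/t_{i_\eta^-})$, it suffices to verify the single algebraic identity $z(t_i,t_j)=z(1/t_i,1/t_j)$.

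First I would check this identity directly from the definition $z(t_i,t_j)=\frac{(t_i+1)(t_j+1)}{2(t_i+t_j)}$. Writing $1/t_i+1=\frac{1+t_i}{t_i}$ and $1/t_i+1/t_j=\frac{t_i+t_j}{t_i t_j}$, the numerator and denominator of $z(1/t_i,1/t_j)$ each acquire a factor $1/(t_i t_j)$, which cancels and returns $\frac{(1+t_i)(1+t_j)}{2(t_i+t_j)}=z(t_i,t_j)$. The degenerate case $i_\eta^+=i_\eta^-$ (an edge adjacent to a single face) is handled identically using $z(t_i,t_i)=\frac{(t_i+1)^2}{4t_i}$, which is manifestly fixed by $t_i\mapsto 1/t_i$.

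With the identity in hand, the substitution $t_k\mapsto 1/t_k$ fixes every factor $z(t_{i_\eta^+},t_{i_\eta^-})$ in the product over edges, because both arguments are face labels in $N$ and hence both inverted. Consequently each summand is left unchanged in its entirety: the sign $(-1)^{e(\Gamma)}$, the symmetry weight $1/|\Aut(\Gamma)|$, and the edge product all depend on the $t_j$ only through the invariant factors $z$. Summing over all ribbon graphs of type $(g,n)$ then yields $F_{g,n}(t_N)=F_{g,n}(1/t_1,\dots,1/t_n)$. I do not anticipate a genuine obstacle here: the only content is the one-line verification of $z(t_i,t_j)=z(1/t_i,1/t_j)$, and everything else is a formal consequence of the product structure furnished by Theorem~\ref{thm:Finz}. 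An inductive proof through the recursion (\ref{eq:LTofN}) would in principle also work, but it would be considerably more laborious and would obscure the genuine reason for the symmetry.
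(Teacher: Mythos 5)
Your proposal is correct and is essentially the paper's own argument: the paper likewise derives the invariance directly from the identity $z(t_i,t_j)=z\!\left(\frac{1}{t_i},\frac{1}{t_j}\right)$ applied to the edge-product expression of $F_{g,n}(t_N)$, with your verification of that identity being the only computation needed.
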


\begin{appendix}
\section{Examples}
\label{sect:examples}
We record a few examples of the Poincar\'e polynomials
here.

\begin{multline}
\label{eq:F04}
F_{0,4}(t_1,t_2,t_3,t_4)=
\frac{1}{2^8}\;(t_1+1)(t_2+1)(t_3+1)(t_4+1)
\\
\times
\left[
\sum_{j=1}^4 t_j^2
-
\sum_{j=1}^4 t_j
-5
-
\sum_{i<j}\frac{1}{t_it_j}
+
\frac{1}{t_1t_2t_3t_4}
\left(-5
-\sum_{j=1}^4 \frac{1}{t_j}
+\sum_{j=1} ^4 \frac{1}{t_j^2}
\right)
\right].
\end{multline}

\begin{multline}
\label{eq:F12}
F_{1,2}(t_1,t_2)=\frac{1}{2^{11}}\;
\Bigg[
t_1^5 t_2+t_1t_2^5+\frac{t_1^3\;t_2^3}{3}+t_1^5+t_2^5
-6(t_1^3t_2+t_1 t_2^3)
-\frac{17}{3}\;(t_1^3+t_2^3)
\\
+27\; t_1t_2+26(t_1+t_2) +\frac{128}{3}
+4\left(\frac{t_1}{t_2}+\frac{t_2}{t_1}\right)
+26\left(\frac{1}{t_1}+\frac{1}{t_2}\right)
+\frac{27}{t_1t_2}
\\
-\frac{17}{3}\left(\frac{1}{t_1^3}+\frac{1}{t_2^3}\right)
-6\left(\frac{1}{t_1t_2^3}+\frac{1}{t_1^3t_2}\right)
+\left(\frac{1}{t_1^5}+\frac{1}{t_2^5}\right)
+\frac{1}{3}\;\frac{1}{t_1^3\;t_2^3}
+\left(\frac{1}{t_1t_2^5}+\frac{1}{t_1^5t_2}\right)
\Bigg].
\end{multline}

\begin{multline}
\label{eq:F21}
F_{2,1}(t) =
-\frac{1}{2^{19}}\;\frac{(t+1)^8}{t^4}
\Bigg(
\frac{35}{3}\;t^{5} -\frac{280}{3}\;t^4+333\;t^3
-704\;t^2
+\frac{5018}{5}\;t
\\
-\frac{5424}{5}
+\frac{5018}{5}\;t^{-1}
-704\;t^{-2}
+333\;t^{-3}
-\frac{280}{3}\;t^{-4}+\frac{35}{3}\; t^{-5}
\Bigg)
\\
=
-\frac{35}{6} \; z^9+\frac{105}{4}\; z^8
-\frac{93}{2}\; z^7 +\frac{161}{4}\; z^6
-\frac{84}{5}\;z^5 + \frac{21}{8}\; z^4,
\end{multline}
where
$z$ is defined by (\ref{eq:ztt}).

\begin{multline}
\label{eq:F31}
F_{3,1}(t) =
-\frac{1}{2^{30}}\;\frac{(t+1)^{12}}{t^{6}}
\Bigg(
\frac{5005}{3}\;t^{9}
-20020\;t^{8}
+112343\;t^{7}
-\frac{1181488}{3}\; t^{6}
\\
+975692\; t^{5}
-1842448\; t^{4}
+\frac{25312028}{9}\; t^{3}
-\frac{10959056}{3}\;t^{2}
+\frac{88361050}{21}\;t
\\
-\frac{277329032}{63}
+\frac{88361050}{21}\;t^{-1}
-\frac{10959056}{3}\;t^{-2}
+\frac{25312028}{9}\; t^{-3}
-1842448\; t^{-4}
\\
+975692\; t^{-5}
-\frac{1181488}{3}\; t^{-6}
+112343\;t^{-7}
-20020\;t^{-8}+\frac{5005}{3}\; t^{-9}
\Bigg)
\\
=
-\frac{5005}{3}\;z^{15}
+\frac{25025}{2}\; z^{14}
- 41118\; z^{13}
+\frac{929929}{12}\; z^{12}
- \frac{183955}{2}\; z^{11}
\\
+ \frac{283767}{4}\; z^{10}
- \frac{317735}{9} \;z^9
+10813\; z^8
- \frac{25443}{14}\; z^7 + \frac{495}{4} z^6.
\end{multline}

\end{appendix}

\begin{ack} The authors are indebted to
Kevin Chapman for providing them with the computational
results reported in
\cite{CMS}. 
During the preparation of this paper,
M.M.\ received support from the American Institute of
Mathematics,
the National Science Foundation, and
Universiti Teknologi Malaysia. The paper was
completed while he was visiting  Tsinghua University in Beijing.
He is particularly grateful to Jian Zhou for his 
hospitality, stimulating discussions
and useful comments on this work.
The research of M.P.\ was supported
 by the University of California, Davis, 
 and the University of Wisconsin-Eau Claire.
\end{ack}


\providecommand{\bysame}{\leavevmode\hbox to3em{\hrulefill}\thinspace}

\bibliographystyle{amsplain}

\begin{thebibliography}{10}



\bibitem{Belyi}
G.~V.~Belyi, \emph{On galois extensions of a maximal cyclotomic fields}, Math.\
  U.S.S.R.\ Izvestija \textbf{14}, 247--256  (1980).



  \bibitem{BCSW}
  J.~Bennett, D.~Cochran, B.~Safnuk, and K.~Woskoff,
  \emph{Topological recursion for symplectic
  volumes of moduli spaces of curves},
  in preparation.



\bibitem{BKMP}
V.~Bouchard, A.~Klemm, M.~Mari\~no, and S.~Pasquetti,
\emph{Remodeling the {B}-model},
Commun.\ Math.\ Phys.
 \textbf{287}, 117--178 (2008).



\bibitem{BM}
V.~ Bouchard and M.~ Mari\~no, 
\emph{Hurwitz numbers, matrix models and enumerative geometry},
Proc.\ Symposia Pure Math.\ 
 \textbf{78}, 263--283 (2008).


\bibitem{CMS}
K.~Chapman, M.~Mulase, and B.~Safnuk,
\emph{
Topological recursion and the Kontsevich constants
for the volume of the moduli of curves}, Preprint (2010).



\bibitem{DVV}
R.~Dijkgraaf, E.~Verlinde, and H.~Verlinde,
\emph{Loop equations and {V}irasoro constraints in non-perturbative two-dimensional quantum gravity}, Nucl. Phys.
\textbf{B348}, 435--456 (1991).















  \bibitem{EMS} B.~Eynard, M.~Mulase
  and B.~Safnuk,
  \emph{The
{L}aplace transform  of the cut-and-join equation
and the {B}ouchard-{M}ari\~no conjecture on
{H}urwitz numbers}, arXiv:0907.5224 math.AG (2009).



\bibitem{EO1}
  B.~Eynard and N.~Orantin,
{\em Invariants of algebraic curves and topological expansion},
Communications in Number Theory
and Physics {\bf 1},  347--452 (2007).




\bibitem{FP1}
C.~Faber and R.~Pandharipande,
\emph{Hodge integrals and Gromov-Witten theory},
Invent. Math. \textbf{139}, 173--199 (2000).


\bibitem{FP2}
C.~Faber and R.~Pandharipande,
\emph{Hodge integrals, partition matrices, and the
$\lam_g$ conjecture},
Ann. of  Math. \textbf{157}, 97--124 (2003).




  \bibitem{GJ}
  I.P.~Goulden and D.M.~Jackson,
  {\em Transitive factorisations into transpositions and
  holomorphic mappings on the sphere},
  Proc. A.M.S., \textbf{125}, 51--60 (1997).




\bibitem{HN}
G.~Harder and M.~S.~Narasimhan, 
\emph{On the cohomology groups of moduli spaces of
vector bundles on curves}, Math.\ Ann.\ \textbf{212}, 215--248
 (1975). 
 
 

\bibitem{Harer}
J.~L. Harer, \emph{The cohomology of the moduli space of curves},
in Theory of
  Moduli, Montecatini Terme, 1985 (Edoardo Sernesi, ed.), Springer-Verlag,
  1988, pp.~138--221.

\bibitem{HZ}
J.~L. Harer and D.~Zagier, \emph{The {Euler} characteristic of the moduli
  space of curves}, Inventiones Mathematicae \textbf{85}, 457--485
   (1986).





\bibitem{H}
A.~Hurwitz,
\emph{\"Uber Riemann'sche Fl\"achen mit gegebene
Verzweigungspunkten},
Mathematische Annalen \textbf{39}, 1--66 (1891).



\bibitem{K1992}
M.~Kontsevich,
\emph{Intersection theory on
the moduli space of
curves and the
matrix {Airy} function},
  Communications in Mathematical Physics
 \textbf{147}, 1--23  (1992).





\bibitem{LX} K.~Liu and H.~Xu, \emph{
Recursion formulae
of  Higher Weil--Petersson volumes},
Intern.\ Math.\ Res.\ Notices, Vol. 2009, No. 5,
835--859 (2009).


\bibitem{M} M.~Mari\~no,
\emph{
Open string amplitudes and large order behavior in 
topological string theory},
arXiv:hep-th/0612127 (2006--2008).



\bibitem{Mir1} M.~Mirzakhani, {\em Simple geodesics and Weil-Petersson volumes of moduli spaces of bordered Riemann surfaces},
Invent. Math. {\bf 167}, 179--222 (2007).

\bibitem{Mir2} M.~Mirzakhani, {\em Weil-Petersson volumes and intersection theory on the moduli space
of curves}, J. Amer. Math. Soc. {\bf 20}, 1--23 (2007).



\bibitem{M1995}
M.~Mulase,
\emph{Asymptotic analysis of a Hermitian matrix integral}, International Journal of Mathematics \textbf{6}, 881--892  (1995).




 \bibitem{MP1998}
 M.~Mulase and M.~Penkava, \emph{
 Ribbon graphs, quadratic differentials on Riemann surfaces, and algebraic curves defined over $\overline{\mathbb{Q}}$}, The Asian Journal of Mathematics  \textbf{2} (4), 875--920 (1998).



\bibitem{MS} M. Mulase and B. Safnuk, {\em Mirzakhani's recursion relations, Virasoro constraints and the KdV hierarchy},
Indian J. Math. \textbf{50}, 189--228 (2008).


\bibitem{MZ} M.~Mulase and N.~Zhang,
\emph{
Polynomial recursion formula for linear Hodge integrals},  Communications in Number Theory
and Physics {\bf 4}, (2010).

  \bibitem{Mumford} D.~Mumford,
  \emph{
  Towards an enumerative geometry of the moduli space of curves}
  (1983),
  in ``Selected Papers of David Mumford,'' 235--292 (2004).



  \bibitem{N1} P.~Norbury,
  \emph{Counting lattice points in the moduli space of curves},
  arXiv:0801.4590  (2008).


  \bibitem{N2} P.~Norbury,
  \emph{String and dilaton equations for counting lattice points in the moduli space of curves},
  arXiv:0905.4141  (2009).








 \bibitem{P}
 R.~Penner,
 \emph{
 Perturbation series and
 the moduli space of Riemann
surfaces},
J.\ Differ.\ Geom.\ {\bf 27}, 35--53  (1988).



\bibitem{SL}
L.~Schneps and P.~Lochak,
\emph{
Geometric Galois actions},
London Mathematical Society Lecture
Notes Series \textbf{242}, 1997.


\bibitem{STT}
D.~D. Sleator, R.~E. Tarjan, and W.~P. Thurston, \emph{Rotation
  distance, triangulations, and hyperbolic geometry}, Journal of the American
  Mathematical Society \textbf{1}, 647--681 (1988).


\bibitem{Strebel}
K.~ Strebel, \emph{Quadratic differentials}, Springer-Verlag, 1984.


\bibitem{tH}
G.~'t Hooft,
\emph{
A planer diagram theory for
strong interactions},
Nuclear Physics \textbf{B
72},
461--473 (1974).

\bibitem{V} R.~Vakil, Harvard Thesis 1997.

\bibitem{W1991}
E.~Witten, \emph{Two dimensional gravity and
intersection
theory on moduli space}, Surveys in
Differential Geometry \textbf{1},
243--310 (1991).


\bibitem{Z1}
J.~Zhou,
\emph{
Local Mirror Symmetry for One-Legged Topological Vertex},
arXiv:0910.4320  (2009).

\bibitem{Z2}
J.~Zhou,
\emph{
Local Mirror Symmetry for the Topological Vertex}
arXiv:0911.2343   (2009).





\end{thebibliography}

\end{document}